\definecolor{labelkey}{rgb}{0,0.08,0.45}
\definecolor{refkey}{rgb}{0,0.6,0.0}
\definecolor{Brown}{rgb}{0.45,0.0,0.05}
\definecolor{lime}{rgb}{0.00,0.8,0.0}
\definecolor{lblue}{rgb}{0.5,0.5,0.99}
\newcommand{\timess}{\,{\textstyle\mathsmaller{\text{\ding{75}}}}}
\newcommand{\GX}{\ensuremath{\Gamma}}
\newcommand{\aw}{\ensuremath{\stackrel{\mathrm{e}}{\to}}}
\newcommand{\pw}{\ensuremath{\stackrel{\mathrm{p}}{\to}}}
\newcommand{\nnn}{\ensuremath{{n\in{\mathbb N}}}}
\newcommand{\thalb}{\ensuremath{\tfrac{1}{2}}}
\newcommand{\menge}[2]{\big\{{#1}~\big |~{#2}\big\}}
\newcommand{\mmenge}[2]{\bigg\{{#1}~\bigg |~{#2}\bigg\}}
\newcommand{\fenv}[1]%
{\ensuremath{\,\overrightarrow{\operatorname{env}}_{#1}}}
\newcommand{\benv}[1]%
{\ensuremath{\,\overleftarrow{\operatorname{env}}_{#1}}}
\newcommand{\scal}[2]{\left\langle{#1},{#2}  \right\rangle}
\newcommand{\exi}{\ensuremath{\exists\,}}
\newcommand{\RR}{\ensuremath{\mathbb R}}
\newcommand{\RP}{\ensuremath{\mathbb{R}_+}}
\newcommand{\RX}{\ensuremath{\,\left]-\infty,+\infty\right]}}
\newcommand{\NN}{\ensuremath{\mathbb N}}
\newcommand{\CC}{\ensuremath{\mathbb C}}
\newcommand{\dom}{\ensuremath{\operatorname{dom}}}
\newcommand{\ran}{\ensuremath{\operatorname{ran}}}
\newcommand{\conv}{\ensuremath{\operatorname{conv}}}
\newcommand{\spa}{\ensuremath{\operatorname{span}}}
\newcommand{\Fix}{\ensuremath{\operatorname{Fix}}}
\newcommand{\Id}{\ensuremath{\operatorname{Id}}}
\newcommand{\jj}{\ensuremath{\,\mathfrak{q}}}
\newcommand{\bx}{\ensuremath{\mathbf{x}}}
\newcommand{\ba}{\ensuremath{\mathbf{a}}}
\newcommand{\bT}{\ensuremath{{\mathbf{T}}}}
\newcommand{\wT}{\ensuremath{{\widetilde{\mathbf{T}}}}}
\newcommand{\bL}{\ensuremath{{\mathbf{L}}}}
\newcommand{\bS}{\ensuremath{{\mathbf{S}}}}
\newcommand{\bD}{\ensuremath{{\mathbf{D}}}}
\newcommand{\bA}{\ensuremath{{{\mathbf{A}}}}}
\newcommand{\bB}{\ensuremath{{\mathbf{B}}}}
\newcommand{\bee}{\ensuremath{\mathbf{e}}}
\newcommand{\bC}{\ensuremath{\mathbf{C}}}
\newcommand{\by}{\ensuremath{\mathbf{y}}}
\newcommand{\sm}{\ensuremath{\mathbb{S}^{N\times N}}}
\newcommand{\smp}{\ensuremath{\mathbb{S}^{N\times N}_{+}}}
\newcommand{\smpp}{\ensuremath{\mathbb{S}^{N\times N}_{++}}}
\newtheorem{theorem}{Theorem}[section]
\newtheorem{corollary}[theorem]{Corollary}
\newtheorem{proposition}[theorem]{Proposition}
\newtheorem{definition}[theorem]{Definition}
\theoremstyle{plain}{\theorembodyfont{\rmfamily}
}
\theoremstyle{plain}{\theorembodyfont{\rmfamily}
}
\theoremstyle{plain}{\theorembodyfont{\rmfamily}
}
\theoremstyle{plain}{\theorembodyfont{\rmfamily}
\newtheorem{example}[theorem]{Example}}
\newtheorem{fact}[theorem]{Fact}
\theoremstyle{plain}{\theorembodyfont{\rmfamily}
\newtheorem{bahy}[theorem]{Basic Hypothesis}}
\theoremstyle{plain}{\theorembodyfont{\rmfamily}
\newtheorem{remark}[theorem]{Remark}}
\newcommand{\boxedeqn}[1]{%
    \[\fbox{%
        \addtolength{\linewidth}{-2\fboxsep}%
        \addtolength{\linewidth}{-2\fboxrule}%
        \begin{minipage}{\linewidth}%
        \begin{equation}#1\\[+4mm]\end{equation}%
        \end{minipage}%
      }\]%
  }
\begin{document}

\title{\textrm{On moving averages}}

\author{
Heinz H.\ Bauschke\thanks{
Mathematics, University
of British Columbia,
Kelowna, B.C.\ V1V~1V7, Canada.
E-mail: \texttt{heinz.bauschke@ubc.ca}.},~
Joshua Sarada\thanks{
Mathematics, University of
British Columbia,
Kelowna, B.C.\ V1V~1V7, Canada. E-mail:
\texttt{jshsarada@gmail.com}.},
~and~
Xianfu\ Wang\thanks{Mathematics,
University of British Columbia,
Kelowna, B.C.\ V1V~1V7, Canada.
E-mail:  \texttt{shawn.wang@ubc.ca}.}
}

\date{June 15, 2012}
\maketitle

\vskip 8mm

\begin{abstract} \noindent
We show that the moving arithmetic average is closely connected to a 
Gauss--Seidel type fixed point method studied by Bauschke, Wang and Wylie,
and which was observed to converge only numerically. 
Our analysis establishes a rigorous
proof of convergence of their algorithm in a special case;
moreover, limit is explicitly identified.  
Moving averages in Banach spaces
and Kolmogorov means are also studied. Furthermore, 
we consider moving proximal averages and epi-averages of convex functions.
\end{abstract}

{\small
\noindent
{\bfseries 2010 Mathematics Subject Classification:}
{Primary 15B51, 26E60, 47H10;
Secondary 39A06, 65H04, 47J25, 49J53. 
}

\noindent {\bfseries Keywords:}
Arithmetic mean, difference equation, 
epi-average, Kolmogorov mean, linear recurrence relation, 
means, moving average,
proximal average, stochastic matrix.
}

\section{Introduction}

Throughout this paper, we assume that $m\in\{2,3,\ldots\}$ and that 
\boxedeqn{
\text{$X=\RR^m$ is
an $m$-dimension real Euclidean space with standard inner
product $\scal{\cdot}{\cdot}$}
}
and induced norm $\|\cdot\|$.
We also assume that
\boxedeqn{\label{e:alphanumb}
\text{$\alpha_{0},\ldots, \alpha_{m-1}$ are nonnegative real numbers such
that $\sum_{i=0}^{m-1}\alpha_{i}=1$, and $\alpha_m := \alpha_0$. }
}
It will be convenient to introduce the following notation for the partial
sums and associated weights:
\boxedeqn{
\big(\forall k\in\{0,1,\ldots,m-1\}\big)\quad
a_k = \sum_{i=0}^k \alpha_i
\;\;\text{and}\;\;\ba = (a_0,a_1,\ldots,a_{m-1})^*.
}
and
\boxedeqn{
\big(\forall k\in \{0,1,\ldots,m-1\}\big)\quad 
\lambda_k 
= \frac{\ba^*\bee_k}{\ba^*\bee}
=\frac{a_k}{\sum_{i=0}^{m-1}a_i}
=
\frac{\sum_{i=0}^{k}\alpha_i}{\sum_{j=0}^{m-1}\sum_{i=0}^{j}\alpha_i}
\;\text{and}\;
\boldsymbol{\lambda}=
\begin{pmatrix}
\lambda_0\\
\vdots\\
\lambda_{m-1}
\end{pmatrix}. 
}
We shall study the homogeneous linear difference equation 
\begin{equation}\label{e:intro}
(\forall n\geq m)\quad 
y_{n}=\alpha_{m-1}y_{n-1}+\cdots+ \alpha_{0}y_{n-m},
\;\;\text{where}\;\; (y_{0},\ldots, y_{m-1})\in \RR^m.
\end{equation}
In the literature, this is called the 
\emph{moving average} (and it is also known as the 
rolling average, rolling mean or running average). 
It says that given a series of numbers and a fixed subset size, the
first element of the moving average is obtained by taking the average
of the initial fixed subset of the number series. Then the subset
is modified by "shifting forward", excluding the first number of
the series and including the next number following the original
subset in the series. This creates a new subset of numbers, which
is averaged. This process is repeated over the entire data series.
The moving average is widely applied in statistics, signal processing,
econometrics and mathematical finance; see, e.g., also
\cite{BBS,BorBor,combettes02,BoxJen,Tsokos}.

In \cite{BWW,BWWSIOPT}, we observed the numerical convergence of a
Gauss--Seidel type fixed point iteration numerically, but we were unable to
provide a rigorous proof.
In this note, we present a connection between the moving average and this 
fixed point recursion; this allows us to give an analytical proof for the 
case when all monotone operators are zero.
Moreover, the limit is identified. However this approach is unlike to
generalize to the general fixed point iteration due to the interlaced
nonlinear resolvents. 

While the results rest primarily on results from linear algebra, we
consider in the second half of the paper several related highly nonlinear
moving averages that exhibit a ``hidden linearity'' and thus allow to be
rigorously studied.

The paper is organized as follows. 
In Section~\ref{s:facts}, we present various facts and auxiliary results
rooted ultimately in Linear Algebra. In
Section~\ref{s:mainresults} we establish the connection between the
moving average and the Gauss--Seidel type iteration scheme studied
by Bauschke, Wang and Wylie, which
 leads to a rigorous proof for the convergence of their algorithm in the
 aforementioned special case. 
In Section~\ref{s:specialcase} we show that the iteration
 matrix has a closed form when $\alpha_{0}=0$ and 
$\alpha_{1}=\cdots=\alpha_{m-1}= 1/(m-1)$. 
Moving Kolmogorov means (also known as $f$-means) 
are considered in Section~\ref{s:fmeans}. In particular, various
known means such as arithmetic mean, harmonic mean, resolvent mean, etc.\ 
\cite{BorBor,bauschke2010} all turn out to be special cases of
this general framework. The final Section~\ref{s:lastsection}
concerns moving proximal averages and epi-averages of convex functions.

Our notation follows \cite{BC2011,Meyer,Rock70,Rock98}. 
The 
\emph{identity operator} $\Id$ is defined by 
$X\to X\colon x\mapsto x$. 
A mapping $T\colon X\to X$ is \emph{nonexpansive} (Lipschitz-$1$) 
if $(\forall x\in X)(\forall y\in X)$ $\|Tx-Ty\|\leq\|x-y\|$. 
The set of \emph{fixed points} of $T$ is denoted by
$\Fix T = \menge{x\in X}{x=Tx}$.  
The following matrix
plays a central role in this paper
\boxedeqn{
\label{e:companionm}
\bA =
\begin{pmatrix}
0 & 1 & 0& \ldots  & 0\\
0 & 0 & 1 & \cdots & 0\\
\vdots & \vdots & \ddots &\ddots &\vdots \\
 0 & 0 & 0 &\cdots & 1\\
\alpha_{0} & \alpha_{1} & \alpha_{2} & \cdots & \alpha_{m-1}
\end{pmatrix}.
}
Note that 
\begin{equation}
\label{e:FixA}
\Fix\bA = \bD=\mmenge{\bx=(x,x,\ldots,x)\in X}{x\in \RR}, 
\end{equation}
where $\bD$ is also called the \emph{diagonal} in $X$. 
By \cite[page~648]{Meyer}, 
the characteristic polynomial of $\bA$ is 
\begin{equation}
p(x)=x^{m}-\alpha_{m-1}x^{m-1}-\cdots-\alpha_{1}x-\alpha_{0},
\end{equation}
and, in turn, $\bA$ is the (transpose of the) companion matrix of $p(x)$. 

As usual, we use $\CC$ for the field of complex numbers;
$\RR_{+}$ ($\RR_{++}$) for the nonnegative real numbers (positive real numbers);
$\NN = \{0,1,2,\ldots\}$ for the natural numbers. 
For $\bB \in \CC^{m\times m}$, 
$\rho(\bB)=\max_{\lambda\in\sigma(\bB)}|\lambda|$
is the spectral radius of $\bB$ and
$\sigma(\bB)$ denotes the \emph{spectrum} of $\bB$, i.e., the set of 
eigenvalues of $\bB$. 
Given $\lambda\in\sigma(\bB)$,
we say that $\lambda$ is \emph{simple} if its algebraic multiplicity is
$1$ and that it is \emph{semisimple} if its algebraic and geometric 
multiplicities coincide. 
A simple eigenvalue is always semisimple,
see \cite[pages~510--511]{Meyer}. 
A vector $\by\in \CC^m\smallsetminus \{0\}$ satisfying
$\bB\by=\lambda\by$ ($\by^{*}\bB=\lambda\by^{*}$) is called a 
\emph{right-hand (left-hand) eigenvector} of $\bB$ with respect to the
eigenvalue $\lambda$. (The $\,^*$ denotes the conjugate transpose of a
vector or a matrix.) 
Then \emph{range} of an operator $\bB$ is denoted by $\ran(\bB)$;
if $\bB$ is linear, we denote its \emph{kernel} by $\ker(\bB)$. 
It will be convenient to denote the $i$th standard unit column vector by
$\bee_i$ and to also set 
$\bee=\sum_{i=1}^{m}\bee_i= (1,1,\ldots,1)^*\in X$.
We denote by $\sm$ the space of all $N\times N$ real symmetric matrices, 
while $\smp$ and $\smpp$ stand, respectively, for the
set of $N\times N$ positive semidefinite and positive definite matrices.
The \emph{greatest common divisor} of a set of integers $S$ is denoted
by $\gcd(S)$. 
Turning to functions, we let $\GX(X)$ be the set of all functions
that are convex, lower semicontinuous, and proper.
We denote the quadratic energy function by
$ \jj = \thalb\|\cdot\|^2$. 
Finally, the \emph{Fenchel conjugate} $g^*$ of a function $g$ is 
given by 
$g^{*}(x)=\sup_{y\in X}(\langle x,y\rangle-g(y))$.

\section{Linear algebraic results}

\label{s:facts}

To make our analysis self-contained, 
let us gather in this section some useful facts and
auxiliary results on stochastic matrices, linear recurrence relations 
and the convergence of matrix powers.

\subsection*{Basic properties of stochastic matrices}

Recall that a matrix $\bB$ with nonnegative entries is called
\emph{stochastic} (or row-stochastic) 
if each row sum is equal to $1$.
(See \cite[Chapter~8]{berman} or \cite[Section~8.4]{Meyer} for more on
stochastic matrices.)

\begin{fact}
{\rm (See \cite[pages~689 and 696]{Meyer}.)}
\label{f:rootone}
Let $\bB\in\RR^{m\times m}$ be stochastic. 
Then $\rho(\bB)=1$ and $1$ is a semisimple eigenvalue of $\bB$
with eigenvector $\bee$. 
\end{fact}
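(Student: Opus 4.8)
The plan is to dispatch the three assertions — that $\bee$ is a right-hand eigenvector for the eigenvalue $1$, that $\rho(\bB)=1$, and that $1$ is semisimple — in that order, the first two being elementary and the third resting on the observation that powers of $\bB$ are again stochastic.

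First I would note that stochasticity of $\bB$ says precisely that $\bB\bee=\bee$, so $1\in\sigma(\bB)$ with right-hand eigenvector $\bee$. For the spectral radius, let $\lambda\in\sigma(\bB)$ with eigenvector $x=(x_1,\dots,x_m)^*\neq 0$, and pick an index $i$ with $|x_i|=\max_{j}|x_j|>0$. Reading off the $i$th coordinate of $\bB x=\lambda x$ and using that the entries $B_{i1},\dots,B_{im}$ are nonnegative with sum $1$ gives
\[
|\lambda|\,|x_i| = \Big|\sum_{j=1}^{m} B_{ij}x_j\Big| \le \sum_{j=1}^{m} B_{ij}\,|x_j| \le \Big(\sum_{j=1}^{m} B_{ij}\Big)|x_i| = |x_i|,
\]
hence $|\lambda|\le 1$. (This is also immediate from Gershgorin's theorem.) Combined with $1\in\sigma(\bB)$, this yields $\rho(\bB)=1$.

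The remaining point is semisimplicity of the eigenvalue $1$, i.e., that its algebraic and geometric multiplicities agree. The key observation is that a product of stochastic matrices is stochastic (nonnegativity of entries is preserved under multiplication, and $\bB_1\bB_2\bee=\bB_1\bee=\bee$), so $\bB^n$ is stochastic for every $n\in\NN$; in particular all entries of $\bB^n$ lie in $[0,1]$, whence $\sup_{n\in\NN}\|\bB^n\|<\infty$ for any matrix norm. Now suppose, for contradiction, that $1$ is not semisimple. Then the Jordan canonical form of $\bB$ contains a block of size at least $2$ associated with the eigenvalue $1$, so there exist vectors $u\neq 0$ and $v$ with $\bB u=u$ and $\bB v=v+u$; a straightforward induction gives $\bB^n v=v+nu$ for all $n$, so $\|\bB^n v\|\to\infty$, contradicting $\|\bB^n v\|\le\|\bB^n\|\,\|v\|\le(\sup_{n}\|\bB^n\|)\,\|v\|$. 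Therefore $1$ is semisimple.

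I expect the semisimplicity step to be the only real obstacle; everything hinges on recognizing that the stochastic structure is inherited by all powers $\bB^n$, which bounds $\|\bB^n\|$ and thereby forbids a nontrivial Jordan chain at $1$ (in fact at any eigenvalue on the unit circle). An alternative to the Jordan-chain argument would be to invoke the standard fact that an eigenvalue $\lambda$ of a power-bounded matrix with $|\lambda|=\rho(\bB)$ is automatically semisimple.
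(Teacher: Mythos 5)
Your proof is correct. Note, however, that the paper itself offers no proof of this statement: it is quoted as a Fact with a citation to Meyer's book, so there is nothing internal to compare against. What you have written is essentially the standard textbook argument that the citation points to: $\bB\bee=\bee$ follows directly from the row sums being $1$; the estimate at a maximal-modulus coordinate (equivalently, Gershgorin, or the fact that the $\infty$-norm of a stochastic matrix is $1$) gives $\rho(\bB)\le 1$, hence $\rho(\bB)=1$; and semisimplicity of the eigenvalue $1$ follows because products of stochastic matrices are stochastic, so the powers $\bB^n$ have entries in $[0,1]$ and are uniformly bounded, which is incompatible with a Jordan block of size $\ge 2$ at $1$ (your chain $\bB^n v=v+nu$ is the right computation, and since the eigenvalue $1$ is real you may indeed take $u,v$ real). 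One small caveat on your closing remark: the general principle is that for a power-bounded matrix every eigenvalue of modulus $1$ is semisimple; phrasing it via $|\lambda|=\rho(\bB)$ happens to be equivalent here only because $\rho(\bB)=1$. In short, your write-up supplies a complete, self-contained proof of a fact the paper delegates to the literature, which is a perfectly reasonable thing to do.
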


The proof of the next result is a simple verification and hence omitted.

\begin{proposition}\label{p:lefteigenv}
The vectors $\ba^*$ and $\bee$ are, respectively, 
left-hand and right-hand eigenvectors of $\bA$ 
associated with the eigenvalue $1$. 
\end{proposition}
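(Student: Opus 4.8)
The plan is a direct verification, exactly as the sentence preceding the statement anticipates. For the right-hand eigenvector I would simply note that every row of $\bA$ sums to $1$: each of the first $m-1$ rows contains a single $1$, while the last row sums to $\sum_{i=0}^{m-1}\alpha_i = 1$ by \eqref{e:alphanumb}. Hence $\bA\bee = \bee$, and since $\bee\neq 0$ this exhibits $\bee$ as a right-hand eigenvector of $\bA$ for the eigenvalue $1$. (This also follows from \eqref{e:FixA}, or from Fact~\ref{f:rootone} once one observes that $\bA$ is stochastic.)

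For the left-hand eigenvector the idea is to exploit the ``shift plus last row'' structure of $\bA$. Writing $\boldsymbol{\alpha}^* = (\alpha_0,\alpha_1,\ldots,\alpha_{m-1})$, we have $\bA = \bS + \bee_m\boldsymbol{\alpha}^*$, where $\bS$ is the nilpotent matrix carrying $1$'s on the superdiagonal and zeros elsewhere. Then $\ba^*\bA = \ba^*\bS + (\ba^*\bee_m)\boldsymbol{\alpha}^*$. Here $\ba^*\bee_m = a_{m-1} = \sum_{i=0}^{m-1}\alpha_i = 1$, so the second summand contributes exactly $\boldsymbol{\alpha}^*$, while $\ba^*\bS$ is $\ba^*$ with its entries shifted one position to the right and a $0$ inserted in the first slot. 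Comparing entries, the first entry of $\ba^*\bA$ is $\alpha_0 = a_0$, and the entry in position $a_k$ becomes $a_{k-1}+\alpha_k = a_k$ for $k\geq 1$, which is precisely the defining recursion for the partial sums $a_k = \sum_{i=0}^{k}\alpha_i$. Hence $\ba^*\bA = \ba^*$, and since $a_{m-1}=1\neq 0$ we have $\ba\neq 0$; so $\ba^*$ is a left-hand eigenvector of $\bA$ for the eigenvalue $1$.

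There is no real obstacle here; the only thing to watch is index bookkeeping, since the $\alpha_i$ and the $a_k$ are indexed from $0$ whereas $\bA$ and the unit vectors $\bee_i$ are indexed from $1$. The whole argument collapses to the two elementary identities $\sum_{i=0}^{m-1}\alpha_i = 1$ and $a_k - a_{k-1} = \alpha_k$, so nothing beyond unwinding definitions is required, which is why the authors rightly call it ``a simple verification.''
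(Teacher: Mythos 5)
Your verification is correct and is exactly the ``simple verification'' the paper alludes to before omitting the proof: row sums equal to $1$ give $\bA\bee=\bee$, and the identities $a_{m-1}=\sum_{i=0}^{m-1}\alpha_i=1$ together with $a_{k}-a_{k-1}=\alpha_k$ give $\ba^*\bA=\ba^*$. The decomposition $\bA=\bS+\bee_m\boldsymbol{\alpha}^*$ is just a tidy way of organizing that computation, so there is nothing to add.
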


\subsection*{Linear recurrence relations and polynomials}

Consider the linear recurrence relation 
\begin{equation}
\label{e:lrecursion}
(\forall n\geq m)\quad 
y_{n}=\alpha_{m-1}y_{n-1}+\cdots+ \alpha_{0}y_{n-m},
\;\;\text{where}\;\; (y_{0},\ldots, y_{m-1})\in \RR^m.
\end{equation}
Setting
$(\forall\nnn)$ $\by^{[n]} = (y_n,y_{n+1},\ldots,y_{n+m-1})^*$,
we see that we can rewrite 
\eqref{e:lrecursion} as
\begin{equation}
\label{e:therewrite}
(\forall\nnn)\quad \by^{[n+1]}=\bA\by^{[n]} = \cdots =
\bA^{n+1}\by^{[0]}.
\end{equation}
This explains our interest in understanding the limiting behaviour of
powers of $\bA$, which yield information about the limiting behaviour
of $(\by^{[n]})_\nnn$ and hence of $(y_n)_\nnn$. 
The solution to \eqref{e:lrecursion} depends on the roots of
characteristic polynomial
\begin{equation}\label{e:characteristic}
p(x)=x^{m}-\alpha_{m-1}x^{m-1}-\cdots-\alpha_{1}x-\alpha_{0}
\end{equation}
of $\bA$.

\begin{fact}
{\rm (See \cite[page~90]{Ostrowski} or \cite[pages~74--75]{Ortega})}
Let $k$ be the number of distinct roots $u_1,\ldots,u_k$ of
\eqref{e:characteristic} with multiplicities 
$m_1,\ldots,m_k$, respectively. 
Then for each $i\in \{1,\ldots,k\}$, there exists
a polynomial $q_i$ of degree $m_i-1$ such that 
the general solution of \eqref{e:lrecursion} is
\begin{equation}
(\forall\nnn)\quad y_{n}=\sum_{i=1}^{k}q_i(n)u_{i}^{n}.
\end{equation}
Consequently, if $k=m$, i.e., all roots are distinct, then 
there exists $(\nu_1,\ldots,\nu_m)\in\CC^m$ such that 
\begin{equation}
(\forall\nnn)\quad y_{n}=\sum_{i=1}^{m}\nu_iu_{i}^{n}.
\end{equation}
\end{fact}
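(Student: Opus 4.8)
The plan is to identify the solution set of \eqref{e:lrecursion} with an $m$-dimensional complex vector space and then to exhibit an explicit basis for it. First I would introduce the shift operator $E$ on complex sequences, $(Ey)_n = y_{n+1}$, and observe that a sequence $(y_n)_\nnn$ satisfies \eqref{e:lrecursion} precisely when $p(E)y = 0$, where $p(E) = E^m - \alpha_{m-1}E^{m-1} - \cdots - \alpha_1 E - \alpha_0\Id$ is formed from the polynomial $p$ of \eqref{e:characteristic} (indeed \eqref{e:lrecursion} is the same as $y_{n+m} = \alpha_{m-1}y_{n+m-1} + \cdots + \alpha_0 y_n$ for all $\nnn$). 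Since a solution is uniquely determined by its initial block $(y_0,\ldots,y_{m-1})\in\CC^m$, the solution space $V := \ker p(E)$ is linearly isomorphic to $\CC^m$; in particular $\dim V = m = \sum_{i=1}^k m_i$.

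Next I would use the factorization $p(x) = \prod_{i=1}^k (x - u_i)^{m_i}$, so that $p(E) = \prod_{i=1}^k (E - u_i\Id)^{m_i}$ with commuting factors. A one-line computation gives $(E - u\Id)(n^j u^n) = u^{n+1}\big((n+1)^j - n^j\big)$, which is $u^{n+1}$ times a polynomial in $n$ of degree at most $j-1$; iterating, $(E - u\Id)^{j+1}$ annihilates the sequence $n\mapsto n^j u^n$. Hence for every $i\in\{1,\ldots,k\}$ and every $j\in\{0,\ldots,m_i-1\}$ the sequence $n\mapsto n^j u_i^n$ belongs to $V$. This yields $\sum_{i=1}^k m_i = m = \dim V$ candidate basis vectors, so it remains to prove their linear independence.

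For independence, suppose $\sum_{i=1}^k q_i(n)u_i^n = 0$ for all $\nnn$, where each $q_i$ is a polynomial of degree at most $m_i - 1$; I must show every $q_i$ vanishes. Fix an index $\ell$ and apply the operator $\prod_{i\ne\ell}(E - u_i\Id)^{m_i}$ to the identity: every term with $i\ne\ell$ is killed, leaving $\prod_{i\ne\ell}(E - u_i\Id)^{m_i}\big(q_\ell(n)u_\ell^n\big) = 0$. Since $(E - u_j\Id)$ sends $q(n)u_\ell^n$ to $u_\ell^{n+1}\big(q(n+1) - (u_j/u_\ell)q(n)\big)$ and $u_j/u_\ell\ne 1$ for $j\ne\ell$, the map $q\mapsto q(n+1) - (u_j/u_\ell)q(n)$ preserves the degree of a polynomial (it scales the leading coefficient by $1 - u_j/u_\ell\ne 0$); hence $\prod_{i\ne\ell}(E - u_i\Id)^{m_i}$ is injective on sequences of the form ``polynomial times $u_\ell^n$'', forcing $q_\ell\equiv 0$. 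Therefore the $m$ sequences form a basis of $V$, every solution is uniquely $y_n = \sum_{i=1}^k q_i(n)u_i^n$ with $\deg q_i\le m_i - 1$, and when $k = m$ each $m_i = 1$, so $q_i$ reduces to a constant $\nu_i$, which is the stated corollary.

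The only real obstacle is the linear independence step; everything else is bookkeeping. An alternative route is to put the companion matrix $\bA$ of \eqref{e:companionm} into Jordan normal form, use that $\bA$ is non-derogatory so each eigenvalue $u_i$ contributes a single Jordan block of size $m_i$, and read the entries of $\bA^n$ — hence of $\by^{[n]}$ via \eqref{e:therewrite} — as combinations of $n^j u_i^n$; this reaches the same conclusion and meshes with the powers $\bA^n$ studied below, but the shift-operator argument above is the most elementary and self-contained.
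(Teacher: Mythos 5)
The paper does not prove this Fact at all---it is quoted from Ostrowski and Ortega---so there is no internal proof to compare against; what you give is the standard shift-operator argument, and for the most part it is correct: $V=\ker p(E)$ is isomorphic to $\CC^m$ via the initial block, the sequences $n\mapsto n^ju_i^n$ ($0\le j\le m_i-1$) lie in $V$, and your degree-preservation trick gives their linear independence, hence a basis. (Minor point of reading: ``degree $m_i-1$'' in the statement must be understood as ``degree at most $m_i-1$'', which is how you use it.)

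The genuine gap is the root $u_\ell=0$. Nothing in the paper's standing hypotheses excludes it: $0$ is a root of $p$ exactly when $\alpha_0=0$, which is the case the authors care most about (Section~4 has $\alpha_0=0$). Your independence step rewrites $(E-u_j\Id)\big(q(n)u_\ell^n\big)$ as $u_\ell^{n+1}\big(q(n+1)-(u_j/u_\ell)q(n)\big)$, which divides by $u_\ell$ and collapses when $u_\ell=0$. For a simple zero root this is repairable (with the convention $0^0=1$, the sequence $n\mapsto 0^n$ is $\delta_0=(1,0,0,\ldots)$ and $(E-u_i\Id)\delta_0=-u_i\delta_0$, so the complementary factors act invertibly on it), but if $0$ has multiplicity $\mu\ge 2$ the candidates $n\mapsto n^j0^n$, $1\le j\le\mu-1$, are identically zero, so your $m$ sequences cannot form a basis---and indeed the displayed formula, read for all $n\in\NN$, fails in that regime: for $m=3$, $\alpha_0=\alpha_1=0$, $\alpha_2=1$ one has $p(x)=x^2(x-1)$, and the solution $(0,1,0,0,\ldots)$ is not of the form $(a+bn)0^n+c$. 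The correct statement there either assumes $\alpha_0>0$, or asserts the representation only for $n\ge\mu$, or replaces the terms at the zero root by the finitely supported sequences $\delta_0,\ldots,\delta_{\mu-1}$. Your alternative route via the Jordan form of the companion matrix $\bA$ (non-derogatory, one block per eigenvalue) handles this automatically, since the eigenvalue $0$ contributes a single nilpotent block whose powers die after $\mu$ steps; so either carry that version out, or split off the factor $E^{\mu}$ before running your operator argument, if you want a proof covering the paper's full parameter range.
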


\begin{fact}[Ostrowski]
{\rm (See \cite[Theorem~12.2]{Ostrowski}.)}
\label{f:simpleroot} 
Let $(b_1,\ldots,b_m)\in\RP^m$ and set 
\begin{equation}\label{Cauchy}
q(x) = x^{m}-b_{1}x^{m-1}-\cdots-b_{m}. 
\end{equation}
Suppose that $\gcd\menge{k\in\{1,\ldots,m\}}{b_k>0}=1$.
Then $q$ has a unique positive root $r$. Moreover,
$r$ is a simple root of $q$, and the modulus of every other root of $q$ is
strictly less than $r$. 
\end{fact}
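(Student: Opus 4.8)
The plan is to reduce everything to the elementary analysis of one real function of a real variable, and then to handle the complex roots through the triangle inequality together with a B\'ezout argument. Put $S:=\menge{k\in\{1,\ldots,m\}}{b_k>0}$, which is nonempty by the gcd hypothesis, and observe that $q(0)=-b_m$, so $0$ is a root of $q$ only when $b_m=0$, in which case it trivially has modulus strictly smaller than any positive number; hence I may restrict attention to $x\in\RPP$ and to roots $\zeta\neq 0$. For such $x$, dividing $q(x)=0$ by $x^m\neq 0$ shows that the positive roots of $q$ are exactly the solutions of $g(x)=1$, where
\[
g\colon\RPP\to\RPP\colon x\mapsto\sum_{k=1}^{m}b_k x^{-k}.
\]

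First I would record the elementary properties of $g$: it is continuous and, because $S\neq\emp$, \emph{strictly} decreasing on $\RPP$, with $g(x)\to+\infty$ as $x\downarrow 0$ and $g(x)\to 0$ as $x\to+\infty$. The intermediate value theorem then produces a unique $r\in\RPP$ with $g(r)=1$, which is the asserted unique positive root of $q$. For the simplicity of this root I would use the factorization $q(x)=x^m\big(1-g(x)\big)$, valid on $\RPP$: differentiating and evaluating at $x=r$, the term carrying the factor $1-g(r)=0$ drops out, leaving $q'(r)=-r^m g'(r)$; since $g'(x)=-\sum_{k=1}^{m}k\,b_k x^{-k-1}<0$ on $\RPP$, this is strictly positive, so $r$ is indeed a simple root.

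For the remaining roots, let $\zeta\in\CC$ be a root of $q$ with $\zeta\neq r$; as noted I may assume $\zeta\neq 0$, so $\zeta^m=\sum_{k=1}^{m}b_k\zeta^{m-k}$ gives $1=\sum_{k=1}^{m}b_k\zeta^{-k}$. Taking moduli and applying the triangle inequality, $1\leq\sum_{k=1}^{m}b_k|\zeta|^{-k}=g(|\zeta|)$, and since $g$ is strictly decreasing with $g(r)=1$ this already yields $|\zeta|\leq r$. To upgrade this to a strict inequality I would argue by contradiction: if $|\zeta|=r$, then the displayed estimate is an equality, hence equality holds in the triangle inequality; as the summands add up to the positive real number $1$, every nonzero summand $b_k\zeta^{-k}$ — that is, the summand for each index $k\in S$ — must be a positive real number, so with $\zeta=re^{i\theta}$ we get $e^{-ik\theta}=1$, i.e.\ $k\theta\in 2\pi\mathbb{Z}$, for every $k\in S$. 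Because $\gcd(S)=1$, B\'ezout's identity furnishes integers $c_k$ ($k\in S$) with $\sum_{k\in S}c_k k=1$, whence $\theta=\sum_{k\in S}c_k(k\theta)\in 2\pi\mathbb{Z}$ and therefore $\zeta=r$, contradicting $\zeta\neq r$. Thus $|\zeta|<r$.

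I expect the monotonicity and limit statements about $g$, as well as the derivative computation, to be entirely routine. The one genuinely delicate point — and the only place the hypothesis enters — is the last step: reading off from equality in the triangle inequality that \emph{all} the active exponentials $e^{-ik\theta}$, $k\in S$, equal $1$, and then converting $k\theta\in 2\pi\mathbb{Z}$ for all $k\in S$ into $\theta\in 2\pi\mathbb{Z}$ via $\gcd(S)=1$. That the gcd hypothesis is essential is visible already from $q(x)=x^2-1$, where $S=\{2\}$, $\gcd(S)=2$, and $-r$ is also a root.
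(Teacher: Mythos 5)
Your proof is correct, and every step checks out: the reduction of positive roots of $q$ to solutions of $g(x)=1$ with $g(x)=\sum_{k=1}^m b_kx^{-k}$ strictly decreasing on $\RPP$, the simplicity via $q'(r)=-r^mg'(r)>0$, the bound $|\zeta|\le r$ from $1\le g(|\zeta|)$, and the strict inequality via the equality case of the triangle inequality (all active summands $b_k\zeta^{-k}$, $k\in S$, positive reals) together with B\'ezout applied to $\gcd(S)=1$. Note, however, that the paper offers no proof of this statement at all: it is stated as a Fact with a citation to Ostrowski's book (Theorem~12.2), so there is no in-paper argument to compare yours against. Your write-up is essentially the classical self-contained proof of that cited theorem, and it correctly isolates where the gcd hypothesis is used; the counterexample $q(x)=x^2-1$ confirming its necessity is a good sanity check.
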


See also \cite{BorErd,Prasolov} for further results on polynomials. 
Let us now state a basic assumption that we will impose repeatedly. 

\begin{bahy}
\label{bahy}
The polynomial $p(x)$ given by \eqref{e:characteristic}
has a unique positive root, which is simple and equal to $1$,
and each other root has modulus strictly less than $1$.
This happens if one of the following holds:
\begin{enumerate}
\item
\label{bahy1}
$\gcd\menge{i\in\{1,\ldots,m\}}{\alpha_{m-i}>0} = 1$. 
\item
\label{bahy2}
$\alpha_{m-1}>0$. 
\item
\label{bahy3}
$(\exi i\in\{1,\ldots,m-1\})$ $\alpha_{m-i}\alpha_{m-(i+1)}>0$. 
\item
\label{bahy4}
$(\exi \{i,j\}\subseteq \{1,\ldots,m-1\})$ $\gcd\{i,j\}=1$ and
$\alpha_{m-i}\alpha_{m-j}>0$. 
\end{enumerate}
\end{bahy}
\begin{proof}
\ref{bahy1}: This is clear from Fact~\ref{f:simpleroot} and the definition of
$p(x)$. 
\ref{bahy2}--\ref{bahy4}: Each condition implies \ref{bahy1}. 
\end{proof}

\subsection*{Limits of matrix powers and linear recurrence relations}

\begin{fact}
{\rm (See \cite[pages~383--386, 518 and 630]{Meyer}.)} 
\label{f:limit}
Let $\bB\in \CC^{m\times m}$. 
Then the following hold:
\begin{enumerate}
\item 
$(\bB^n)_\nnn$ converges to a nonzero matrix if and only if 
$1$ is a semisimple eigenvalue of $\bB$ and every other eigenvalue of
$\bB$ has modulus strictly less than $1$. 
\item 
If $\lim_\nnn \bB^n$ exists, then 
is the projector onto $\ker(\Id-\bB)$ along $\ran(\Id-\bB)$.
\item
\label{f:limit3}
If $\rho(\bB)=1$ is a simple eigenvalue with right-hand and left-hand eigenvectors
$x$ and $y^*$ respectively, then 
$\displaystyle \lim_{\nnn} \bB^n = \frac{xy^*}{y^*x}$.  
\end{enumerate}
\end{fact}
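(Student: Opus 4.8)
The plan is to reduce everything to the Jordan canonical form of $\bB$. Writing $\bB=\bS\bJ\bS^{-1}$ with $\bJ$ block diagonal and each Jordan block of the shape $\lambda\bId+\bN$ (with $\bN$ the nilpotent shift of the appropriate size $s$), we have $\bB^n=\bS\bJ^n\bS^{-1}$, so the whole question reduces to the behaviour of $\bJ^n$ block by block. For one block, $(\lambda\bId+\bN)^n=\sum_{j=0}^{s-1}\binom{n}{j}\lambda^{n-j}\bN^j$ once $n\ge s$, and I would run through four cases: (a) if $|\lambda|<1$ then each coefficient $\binom{n}{j}\lambda^{n-j}\to 0$ and the block tends to $0$; (b) if $|\lambda|=1$ but $\lambda\neq 1$ then already the diagonal entries $\lambda^n$ fail to converge; (c) if $\lambda=1$ and $s\ge 2$ then the first superdiagonal entry equals $\binom{n}{1}=n\to\infty$; (d) if $\lambda=1$ and $s=1$ the block is the constant $1$. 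Assembling these, $(\bB^n)_\nnn$ converges exactly when every unimodular eigenvalue equals $1$ and $1$ is semisimple (i.e.\ all its Jordan blocks are $1\times 1$), and the limit is nonzero precisely when in addition $1\in\sigma(\bB)$; this is~(i). The same computation shows that when the limit exists it equals $\bS\,E\,\bS^{-1}$, where $E$ is block diagonal, equal to the identity on the eigenvalue-$1$ blocks and to $0$ on the others.

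For~(ii) I would avoid reading the projection off the Jordan form and argue directly. From $\bB^{n+1}=\bB\bB^n=\bB^n\bB$ and continuity of matrix multiplication, the limit $\bL$ satisfies $\bB\bL=\bL\bB=\bL$; hence $\bB^n\bL=\bL$ for all $n$, and letting $n\to\infty$ in $\bL=\bB^n\bL$ yields $\bL^2=\bL$, so $\bL$ is a projector. Next, $(\Id-\bB)\bL=0$ gives $\ran\bL\subseteq\ker(\Id-\bB)$, while any $x$ with $\bB x=x$ satisfies $\bB^nx=x$, hence $x=\bL x\in\ran\bL$; so $\ran\bL=\ker(\Id-\bB)$. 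Dually, $\bL(\Id-\bB)=0$ gives $\ran(\Id-\bB)\subseteq\ker\bL$, and a rank--nullity count, $\dim\ker\bL=m-\dim\ran\bL=m-\dim\ker(\Id-\bB)=\dim\ran(\Id-\bB)$, forces equality. Thus $\bL$ is the projector onto $\ker(\Id-\bB)$ along $\ran(\Id-\bB)$.

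For~(iii), when $1$ is a simple eigenvalue we have $\ker(\Id-\bB)=\spa\{x\}$, and $\ran(\Id-\bB)=\ker((\Id-\bB)^*)^\perp=\ker(\Id-\bB^*)^\perp=\{z\in\CC^m : y^*z=0\}$ because $y$ spans $\ker(\Id-\bB^*)$. Since $1$ is semisimple, $\ker(\Id-\bB)\cap\ran(\Id-\bB)=\{0\}$, so $x\notin\{z : y^*z=0\}$, that is $y^*x\neq 0$, and $M:=xy^*/(y^*x)$ is well defined. A one-line check gives $M^2=M$, $\ran M=\spa\{x\}$, and $\ker M=\{z : y^*z=0\}$, so $M$ is exactly the projector identified in~(ii); therefore $\lim_{\nnn}\bB^n=\bL=M=xy^*/(y^*x)$.

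The only real obstacle is the case analysis in the first paragraph: one must carefully separate ``$(\bB^n)_\nnn$ converges'' (which needs both semisimplicity of the eigenvalue $1$ and a strict spectral gap for all other eigenvalues) from ``the limit is nonzero'' (which additionally needs $1$ to be an eigenvalue at all), and one must justify $\binom{n}{j}|\lambda|^{n-j}\to 0$ for $|\lambda|<1$ uniformly over the finitely many relevant $j$. Everything afterwards is soft. I would also note that~(iii) tacitly assumes the hypotheses of~(i): $\rho(\bB)=1$ together with simplicity of $1$ does not by itself rule out another eigenvalue on the unit circle, so in a self-contained account I would either add ``and every other eigenvalue has modulus strictly less than $1$'' to its statement or invoke~(i) explicitly.
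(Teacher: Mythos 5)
Your proof is correct. Note, however, that the paper does not prove this statement at all: it is quoted as a Fact from Meyer's book (the cited pages), so there is no in-paper argument to compare against; your Jordan-form reduction, the algebraic identification of the limit as the projector onto $\ker(\Id-\bB)$ along $\ran(\Id-\bB)$, and the verification that $xy^*/(y^*x)$ is that projector together reconstruct precisely the standard textbook argument underlying the citation. Your closing caveat about item (iii) is also well taken: as stated, ``$\rho(\bB)=1$ is a simple eigenvalue'' does not by itself exclude other unimodular eigenvalues (e.g.\ $\bB=\mathrm{diag}(1,-1)$), so the formula for $\lim_\nnn\bB^n$ needs the additional hypothesis that every other eigenvalue has modulus strictly less than $1$; in the paper this is harmless because the only application (Corollary~\ref{c:limbA}) is made under the Basic Hypothesis, which guarantees exactly that spectral gap.
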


\begin{corollary}
\label{c:limbA}
Suppose that the {\rm\bf Basic~Hypothesis~\ref{bahy}} holds. 
Then 
\begin{equation}
\lim_{\nnn}\bA^n = \frac{\bee\ba^*}{\ba^*\bee}=
\frac{1}{\sum_{k=0}^{m-1}\sum_{i=0}^{k}\alpha_{i}}
\begin{pmatrix}
\alpha_{0} &\sum_{i=0}^{1}\alpha_{i}& \cdots &  \sum_{i=0}^{m-2}\alpha_{i} & 1\\
\alpha_{0} &\sum_{i=0}^{1}\alpha_{i}& \cdots &  \sum_{i=0}^{m-2}\alpha_{i} & 1\\
\vdots & & &  & &\\
\alpha_{0} &\sum_{i=0}^{1}\alpha_{i}& \cdots &  \sum_{i=0}^{m-2}\alpha_{i} & 1
\end{pmatrix}
=\bee\boldsymbol{\lambda}^*.
\end{equation}
\end{corollary}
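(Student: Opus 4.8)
The plan is to apply Fact~\ref{f:limit}\ref{f:limit3} directly to $\bB=\bA$. First I would note that under the {\bf Basic~Hypothesis~\ref{bahy}}, the characteristic polynomial $p(x)$ has $1$ as a simple root and every other root has modulus strictly less than $1$; since the eigenvalues of $\bA$ are exactly the roots of $p(x)$, this means $\rho(\bA)=1$ and $1$ is a simple eigenvalue of $\bA$. This is precisely the hypothesis needed to invoke Fact~\ref{f:limit}\ref{f:limit3}, so $\lim_\nnn \bA^n$ exists and equals $xy^*/(y^*x)$, where $x$ is a right-hand and $y^*$ a left-hand eigenvector of $\bA$ for the eigenvalue $1$.

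Next I would identify these eigenvectors using Proposition~\ref{p:lefteigenv}: the right-hand eigenvector can be taken to be $x=\bee$ and the left-hand eigenvector to be $y^*=\ba^*$. Substituting into the formula gives
\begin{equation}
\lim_\nnn \bA^n = \frac{\bee\ba^*}{\ba^*\bee},
\end{equation}
where the denominator $\ba^*\bee = \sum_{k=0}^{m-1}a_k = \sum_{k=0}^{m-1}\sum_{i=0}^{k}\alpha_i$ is strictly positive because $a_{m-1}=\sum_{i=0}^{m-1}\alpha_i = 1 > 0$, so division is legitimate.

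It then remains only to unwind the notation. The outer product $\bee\ba^*$ is the $m\times m$ matrix each of whose rows equals $\ba^* = (a_0,a_1,\ldots,a_{m-1}) = (\alpha_0,\sum_{i=0}^{1}\alpha_i,\ldots,\sum_{i=0}^{m-2}\alpha_i,1)$, using $a_{m-1}=1$; dividing by $\ba^*\bee$ yields the displayed matrix. Finally, by the very definition of $\boldsymbol{\lambda}$, namely $\lambda_k = a_k/(\ba^*\bee)$, we have $\boldsymbol{\lambda}^* = \ba^*/(\ba^*\bee)$, so $\bee\ba^*/(\ba^*\bee) = \bee\boldsymbol{\lambda}^*$, which completes the chain of equalities.

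There is essentially no obstacle here: the result is a direct corollary once the {\bf Basic~Hypothesis} is translated into the spectral language of Fact~\ref{f:limit}\ref{f:limit3} and the eigenvectors are supplied by Proposition~\ref{p:lefteigenv}. The only points requiring a word of care are verifying that $1$ being a \emph{simple} root of $p$ (not merely semisimple) is what licenses the clean rank-one formula, and checking the positivity of the normalizing constant; both are immediate.
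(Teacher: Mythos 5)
Your proof is correct and matches the paper's argument: the paper likewise deduces the corollary directly from Proposition~\ref{p:lefteigenv}, Fact~\ref{f:limit}\ref{f:limit3} and the definitions, with your write-up merely spelling out the eigenvector identification and the positivity of $\ba^*\bee$.
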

\begin{proof}
Clear from Proposition~\ref{p:lefteigenv}, 
Fact~\ref{f:limit}\ref{f:limit3} and the definitions.
\end{proof}

\begin{corollary}\label{c:positivecoe}
Suppose that the {\rm\bf Basic~Hypothesis~\ref{bahy}} holds and
consider sequence $(y_n)_\nnn$ generated by the linear recurrence relation 
\begin{equation}
(\forall n\geq m)\quad 
y_{n}=\alpha_{m-1}y_{n-1}+\cdots+ \alpha_{0}y_{n-m},
\;\;\text{where}\;\; \by = (y_{0},\ldots, y_{m-1})\in \RR^m.
\end{equation}
Then 
\begin{equation}
\lim_\nnn y_n = \frac{\ba^*\by}{\ba^*\bee} =
\frac{\sum_{k=0}^{m-1}a_ky_k}{\sum_{k=0}^{m-1}a_k } = 
\frac{\sum_{k=0}^{m-1}\sum_{i=0}^{k}\alpha_i
y_k}{\sum_{k=0}^{m-1}\sum_{i=0}^{k}\alpha_i}
=\sum_{k=0}^{m-1}\lambda_ky_k.
\end{equation}
\end{corollary}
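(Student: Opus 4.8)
The plan is to reduce the statement about the scalar recurrence $(y_n)_\nnn$ to the already-established limit for the matrix powers $\bA^n$ in Corollary~\ref{c:limbA}, via the vectorization $\by^{[n]}=(y_n,y_{n+1},\ldots,y_{n+m-1})^*$. First I would recall from \eqref{e:therewrite} that $\by^{[n]}=\bA^n\by^{[0]}$ for every $\nnn$, where $\by^{[0]}=\by=(y_0,\ldots,y_{m-1})^*$. Since the {\rm\bf Basic~Hypothesis~\ref{bahy}} is in force, Corollary~\ref{c:limbA} applies and gives $\lim_\nnn\bA^n = \bee\boldsymbol{\lambda}^* = \bee\ba^*/(\ba^*\bee)$. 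Hence the whole vector $\by^{[n]}$ converges, and in particular its first coordinate $y_n$ converges.

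Next I would extract the limit of the first coordinate. We have
\begin{equation}
\lim_\nnn \by^{[n]} = \Big(\lim_\nnn \bA^n\Big)\by = \frac{\bee\ba^*}{\ba^*\bee}\,\by = \frac{\ba^*\by}{\ba^*\bee}\,\bee,
\end{equation}
so every coordinate of the limit equals the scalar $\ba^*\by/(\ba^*\bee)$; reading off the first coordinate yields $\lim_\nnn y_n = \ba^*\by/(\ba^*\bee)$. The remaining equalities in the statement are then just unwinding definitions: $\ba^*\by=\sum_{k=0}^{m-1}a_ky_k=\sum_{k=0}^{m-1}\sum_{i=0}^{k}\alpha_i y_k$ by the definition of $a_k$, $\ba^*\bee=\sum_{k=0}^{m-1}a_k=\sum_{k=0}^{m-1}\sum_{i=0}^{k}\alpha_i$, and $\ba^*\by/(\ba^*\bee)=\sum_{k=0}^{m-1}\lambda_k y_k$ since $\lambda_k=a_k/\sum_{i=0}^{m-1}a_i$.

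There is essentially no obstacle here: the real content has already been done in Corollary~\ref{c:limbA}, and this corollary is a routine translation from the matrix statement to the scalar one through the companion-matrix rewriting \eqref{e:therewrite}. The only minor point to be careful about is the indexing convention — making sure that $\by^{[0]}=(y_0,\ldots,y_{m-1})^*$ is exactly the vector denoted $\by$ in the statement and that the first component of $\by^{[n]}$ is indeed $y_n$ — but this is immediate from the definitions. I would therefore present the proof as a short two-line deduction: invoke \eqref{e:therewrite}, apply Corollary~\ref{c:limbA}, multiply by $\by$, take the first coordinate, and rewrite the resulting quotient in the four equivalent forms displayed in the statement.
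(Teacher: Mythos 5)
Your proposal is correct and follows exactly the paper's route: the paper's proof is the one-line deduction from \eqref{e:therewrite} and Corollary~\ref{c:limbA}, which you have simply spelled out (passing to $\by^{[n]}=\bA^n\by$, using $\lim_\nnn\bA^n=\bee\ba^*/(\ba^*\bee)$, and reading off a coordinate). No gaps; the definitional unwinding of the four equivalent expressions is also as intended.
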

\begin{proof}
This follows from Corollary~\ref{c:limbA} and \eqref{e:therewrite}. 
\end{proof}


\begin{definition}
\label{d:ell}
Let $Y$ be a real Banach space and consider
the sequence $(y_n)_\nnn$ in $Y$
generated by the linear recurrence relation 
\begin{equation}
(\forall n\geq m)\quad 
y_{n}=\alpha_{m-1}y_{n-1}+\cdots+ \alpha_{0}y_{n-m},
\;\;\text{where}\;\; \by = (y_{0},\ldots, y_{m-1})\in Y^m.
\end{equation}
If $\lim_\nnn y_n$ exists, then we set it equal to 
$\ell(\by)$ and we write $\by\in\dom\ell$.
\end{definition}

\begin{remark}
\label{r:0614b} 
Note that $\dom\ell$ is a linear subspace of $Y^m$,
that $\menge{\by=(y,\ldots,y)\in Y^m}{y\in Y}\subseteq\ell$, 
and that $\ell\colon \dom\ell\to Y$ is a linear operator.
Furthermore,
\begin{equation}
\big(\forall (y_{0},\ldots, y_{m-1})\in Y^m\big)\quad
\{y_n\}_\nnn \subseteq \conv\{y_0,\ldots,y_{m-1}\}
\subseteq \spa\{y_0,\ldots,y_{m-1}\}.
\end{equation}
This implies
\begin{equation}
\big(\forall \by=(y_{0},\ldots, y_{m-1})\in \dom\ell)\quad
\ell (\by)\in \conv\{y_0,\ldots,y_{m-1}\}
\subseteq \spa\{y_0,\ldots,y_{m-1}\}
\end{equation}
because the convex hull of finitely many points is compact
(see, e.g., \cite[Corollary~5.30]{AliBor}). 
\end{remark}

We are now ready to lift Corollary~\ref{c:positivecoe} to general
Banach spaces.

\begin{corollary}\label{c:generalcase}
Suppose that the {\rm\bf Basic~Hypothesis~\ref{bahy}} holds and
consider sequence $(y_n)_\nnn$ generated by the linear recurrence relation 
\begin{equation}
\label{e:general}
(\forall n\geq m)\quad 
y_{n}=\alpha_{m-1}y_{n-1}+\cdots+ \alpha_{0}y_{n-m},
\;\;\text{where}\;\; \by = (y_{0},\ldots, y_{m-1})\in Y^m,
\end{equation}
where $Y$ is a real Banach space. 
Then 
\begin{equation}
\label{e:0614a}
\lim_\nnn y_n = 
\ell(y_0,\ldots,y_{m-1}) = 
\frac{\sum_{k=0}^{m-1}a_ky_k}{\sum_{k=0}^{m-1}a_k } = 
\frac{\sum_{k=0}^{m-1}\sum_{i=0}^{k}\alpha_i
y_k}{\sum_{k=0}^{m-1}\sum_{i=0}^{k}\alpha_i}
=\sum_{k=0}^{m-1}\lambda_ky_k.
\end{equation}
\end{corollary}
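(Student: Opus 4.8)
The plan is to reduce the Banach-space case to the scalar case already settled in Corollary~\ref{c:positivecoe} by testing against continuous linear functionals. First I would fix an arbitrary $\by=(y_0,\ldots,y_{m-1})\in Y^m$ and let $(y_n)_\nnn$ be the sequence it generates via \eqref{e:general}. For any $\phi\in Y^*$, applying $\phi$ to the recurrence and using linearity shows that the scalar sequence $(\phi(y_n))_\nnn$ satisfies the scalar recurrence \eqref{e:lrecursion} with initial data $(\phi(y_0),\ldots,\phi(y_{m-1}))$, so Corollary~\ref{c:positivecoe} gives
\begin{equation}
\label{e:phi-limit}
\lim_\nnn \phi(y_n) = \sum_{k=0}^{m-1}\lambda_k\phi(y_k) = \phi\bigg(\sum_{k=0}^{m-1}\lambda_k y_k\bigg).
\end{equation}

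Next I would upgrade this weak-type convergence to norm convergence. The key observation, recorded in Remark~\ref{r:0614b}, is that the whole sequence $(y_n)_\nnn$ lives in the compact set $K:=\conv\{y_0,\ldots,y_{m-1}\}$. Hence $(y_n)_\nnn$ has a convergent subsequence $y_{n_j}\to z$ for some $z\in K$; by \eqref{e:phi-limit} applied along this subsequence, $\phi(z)=\phi\big(\sum_k\lambda_k y_k\big)$ for every $\phi\in Y^*$, and the Hahn--Banach theorem forces $z=\sum_{k=0}^{m-1}\lambda_k y_k$. Since this is the only possible subsequential limit and $(y_n)_\nnn$ lies in a compact set, the full sequence converges to it; that is, $\by\in\dom\ell$ and $\ell(\by)=\sum_{k=0}^{m-1}\lambda_k y_k$. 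The remaining equalities in \eqref{e:0614a} are just the definitions of $a_k$ and $\lambda_k$ carried over verbatim from Corollary~\ref{c:positivecoe}, so no further work is needed there.

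The main obstacle — really the only non-formal point — is ensuring that norm convergence genuinely follows from componentwise (functional) convergence, which is false in general Banach spaces without the compactness input; so the argument must lean on the convex-hull-of-finitely-many-points compactness cited in Remark~\ref{r:0614b}. One should also take a moment to check the edge behavior of the functional argument: \eqref{e:phi-limit} holds for all $\phi$ simultaneously with the \emph{same} candidate limit, which is what makes the Hahn--Banach separation clean. An alternative, slightly more self-contained route would be to work directly in the finite-dimensional subspace $V:=\spa\{y_0,\ldots,y_{m-1}\}$ of $Y$: the sequence stays in $V$, all norms on $V$ are equivalent, so convergence in $V$ reduces to coordinatewise convergence, and each coordinate sequence again obeys the scalar recurrence, letting one invoke Corollary~\ref{c:positivecoe} coordinate by coordinate. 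I would likely present the functional/Hahn--Banach version as the main proof since it is the shortest, mentioning the finite-dimensional reduction as the conceptual reason it works.
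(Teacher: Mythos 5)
Your proposal is correct and follows essentially the same route as the paper: apply an arbitrary $\phi\in Y^*$ to reduce to the scalar recurrence and invoke Corollary~\ref{c:positivecoe}, then use the compactness of $\conv\{y_0,\ldots,y_{m-1}\}$ (equivalently, the finite-dimensional span) from Remark~\ref{r:0614b} to upgrade the resulting weak convergence to norm convergence. Your subsequence/Hahn--Banach phrasing and the finite-dimensional-coordinates alternative are just more explicit versions of the paper's final step, so nothing is missing.
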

\begin{proof}
Denote the right-hand side of \eqref{e:0614a} by $z$, 
let $y^*\in Y^*$ and define $(\forall\nnn)$ 
$\eta_n=y^*(y_n)$. Applying $y^*$ to \eqref{e:general} gives rise
to the linear recurrence relation 
\begin{subequations}
\begin{equation}
\label{e:shadow1}
(\forall n\geq m)\quad 
\eta_{n}=\alpha_{m-1}\eta_{n-1}+\cdots+ \alpha_{0}\eta_{n-m},
\end{equation}
where 
\begin{equation}
\label{e:shadow2}
(\eta_{0},\ldots, \eta_{m-1})=\big(y^*(y_0),\ldots,y^*(y_{m-1})\big)\in \RR^m.
\end{equation}
\end{subequations}
Corollary~\ref{c:positivecoe} and the linearity and continuity of $y^*$ 
imply that 
\begin{equation}
y^*(y_n) = \eta_n \to \frac{\sum_{k=0}^{m-1}a_k\eta_k}{\sum_{k=0}^{m-1}a_k }
= y^*\bigg(\frac{\sum_{k=0}^{m-1}a_ky_k}{\sum_{k=0}^{m-1}a_k }\bigg) =
y^*(z). 
\end{equation}
It follows that $(y_n)_\nnn$ converges weakly to $z$. 
On the other hand, $(y_n)_\nnn$ lies not only in a compact subset but also
in a finite-dimensional subspace of $Y$ (see Remark~\ref{r:0614b}). 
Altogether, we deduce that $(y_n)_\nnn$ strongly converges to $z$. 
\end{proof}

\subsection*{Reducible matrices}

Recall (see, e.g., \cite[page~671]{Meyer}) 
that $\bB\in \CC^{m\times m}$ is \emph{reducible} 
if there exists a permutation matrix
$P$ such that
\begin{equation}
P^{*}\bB P=\begin{pmatrix}
U & W \\
0 & V
\end{pmatrix},
\end{equation}
where $U$ and $V$ are nontrivial square matrices;
otherwise, $\bB$ is \emph{irreducible}.

\begin{proposition}\label{p:reducible}
Let $\bB\in\CC^{m\times m}$ have at least one zero column. 
Then $\bB$ is reducible.
\end{proposition}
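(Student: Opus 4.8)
The plan is to exhibit a permutation that pushes the zero column to a block-triangular position. Suppose column $j$ of $\bB$ is entirely zero. The key observation is that the standard unit vector $\bee_j$ then satisfies $\bB^*\bee_j = 0$, i.e.\ row $j$ of $\bB^*$ is zero, equivalently column $j$ of $\bB$ is zero; more usefully, for the reducibility definition we want a permutation that isolates index $j$. Concretely, choose the permutation matrix $P$ whose effect is to move index $j$ to the \emph{last} coordinate, i.e.\ $P\bee_m = \bee_j$ and $P$ permutes the remaining indices arbitrarily among $\{1,\ldots,m-1\}$.

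The main step is then a direct block computation. Write $P^*\bB P$ and inspect its last column: the $(\cdot,m)$ entry of $P^*\bB P$ equals $\bee_k^* P^* \bB P \bee_m = (P\bee_k)^* \bB (P\bee_m) = (P\bee_k)^* \bB \bee_j$, and since column $j$ of $\bB$ is zero we have $\bB\bee_j = 0$, so the entire last column of $P^*\bB P$ vanishes. Partitioning $P^*\bB P$ as a $2\times 2$ block matrix with blocks of sizes $(m-1)+1$, the zero last column forces the shape
\begin{equation}
P^*\bB P = \begin{pmatrix} U & W \\ 0 & 0 \end{pmatrix},
\end{equation}
where $U\in\CC^{(m-1)\times(m-1)}$, $W\in\CC^{(m-1)\times 1}$, the lower-left block is the $1\times(m-1)$ zero row, and the lower-right block is the $1\times 1$ zero matrix. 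Taking $V = (0)$, the $1\times 1$ zero matrix, and noting that $U$ is an $(m-1)\times(m-1)$ matrix with $m\geq 2$, both $U$ and $V$ are nontrivial square matrices, which is exactly the definition of reducibility. Hence $\bB$ is reducible.

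There is essentially no obstacle here; the only mild subtlety is bookkeeping about which coordinate the permutation sends where, and confirming that the resulting diagonal blocks are genuinely ``nontrivial'' in the sense used by the paper (square of size at least $1$, with $m\geq 2$ guaranteeing the complementary block is also nonempty). One could alternatively phrase the argument without choosing a specific $P$: any permutation conjugation that places the zero column in the last position works, and the freedom in permuting the other $m-1$ indices is irrelevant. I would present the explicit version above since it makes the block structure transparent and keeps the verification to a one-line matrix-entry computation.
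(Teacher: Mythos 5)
There is a genuine error in your block identification. Moving the zero column of $\bB$ to the \emph{last} coordinate makes the last \emph{column} of $P^*\bB P$ zero; with the $(m-1)+1$ partition this means the upper-right $(m-1)\times 1$ block and the $(m,m)$ entry vanish, so the true shape is
\begin{equation*}
P^*\bB P=\begin{pmatrix} U & 0\\ v^* & 0\end{pmatrix},
\end{equation*}
where $v^*$ is the last row of $P^*\bB P$ restricted to the first $m-1$ columns, and there is no reason for $v^*$ to be zero. Your claim that ``the zero last column forces'' the lower-left block to vanish conflates the last column with the last row; the matrix you obtain is block \emph{lower} triangular, which is not literally the form required by the paper's definition of reducibility (zero block in the lower-left, with the nontrivial square blocks $U$, $V$ on the diagonal of a block \emph{upper} triangular matrix). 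So as written the final step ``which is exactly the definition of reducibility'' does not follow.

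The repair is immediate and is in fact the paper's own route: send the zero column to the \emph{first} coordinate instead. Then the first column of $P^*\bB P$ is zero, so with the $1+(m-1)$ partition the lower-left $(m-1)\times 1$ block vanishes, $U=(0)\in\CC^{1\times 1}$, and $V\in\CC^{(m-1)\times(m-1)}$, giving the required block upper triangular form (your entrywise computation $\bee_k^*P^*\bB P\bee_1=(P\bee_k)^*\bB\bee_j=0$ goes through verbatim with $\bee_1$ in place of $\bee_m$). Alternatively, keep your choice of $P$ but conjugate additionally by the order-reversing permutation, or invoke the fact that $\bB$ is reducible iff $\bB^*$ is; either way an extra step is needed that your write-up omits.
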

\begin{proof} 
By assumption, there exists a permutation matrix $P$ 
such that the first column of $\bB P$ is equal to the zero vector. 
Then 
\begin{equation}
P^{*}\bB P=\begin{pmatrix}
U & W \\
0 & V
\end{pmatrix},
\end{equation}
where $U=0\in\CC^{1\times 1}$ and 
$V\in \CC^{(m-1)\times (m-1)}$ are nontrivial square matrices.
\end{proof}

\subsection*{Circulant matrices}

It is interesting to compare the above results to circulant matrices.
To this end, we set
\begin{equation}
\bC=\begin{pmatrix}
\alpha_{0} &\alpha_{1}& \alpha_{2} & \cdots & \alpha_{m-1}\\
\alpha_{m-1} & \alpha_{0} &\alpha_{1} & \cdots & \alpha_{m-2}\\
\alpha_{m-2} &\alpha_{m-1} &\alpha_{0} &\cdots &\alpha_{m-3}\\
& \cdots & & \vdots \\
\alpha_{1} &\alpha_{2}& \alpha_{3}& \cdots &\alpha_{0}
\end{pmatrix}.
\end{equation}

\begin{fact}
{\rm (See \cite[Theorems 1 and 2]{Krafft}.)}
\label{f:kraftsh}
Set $U = \menge{k\in\{1,\ldots,m\}}{\alpha_{k}>0}$. 
Then
\begin{equation}
\bL = \lim_\nnn \bC^n \text{~exists}
\end{equation}
if and only if $\gamma = \gcd(U\cup\{m\}) = \gcd((U-U)\cup\{m\})$,
in which case the entries of $\bL$ satisfy 
\begin{equation}
\bL_{i,j} = \begin{cases}
\gamma/m, &\text{if $i\equiv j\pmod{\gamma}$;}\\
0, &\text{otherwise.}
\end{cases}
\end{equation}
\end{fact}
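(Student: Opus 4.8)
The statement to prove is Fact~\ref{f:kraftsh}, which the authors cite to \cite{Krafft}; but since we are asked to give a proof plan, I will describe how one proves the convergence of powers of the doubly stochastic circulant $\bC$ and identifies the limit.

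\textbf{Overall strategy.} The matrix $\bC$ is a circulant matrix, so it is diagonalized by the discrete Fourier transform: writing $\omega = e^{2\pi i/m}$ and $F$ for the $m\times m$ matrix with entries $F_{jk} = \omega^{jk}/\sqrt m$, one has $\bC = F^* \operatorname{diag}(\mu_0,\ldots,\mu_{m-1}) F$, where $\mu_k = \sum_{j=0}^{m-1}\alpha_j\omega^{jk} = \hat\alpha(\omega^k)$ is the value of the generating polynomial $\hat\alpha(z) = \sum_j \alpha_j z^j$ at the $k$th root of unity. Consequently $\bC^n = F^*\operatorname{diag}(\mu_0^n,\ldots,\mu_{m-1}^n)F$, and $\lim_\nnn \bC^n$ exists if and only if $\lim_\nnn \mu_k^n$ exists for every $k$. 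Since the $\alpha_j$ are nonnegative and sum to $1$, each $\mu_k$ is a convex combination of the unit-modulus numbers $\omega^{jk}$, so $|\mu_k|\le 1$, with $\mu_0 = 1$; the limit $\lim_\nnn\mu_k^n$ exists exactly when either $|\mu_k|<1$ (limit $0$) or $\mu_k = 1$ (limit $1$), and fails to exist precisely when $\mu_k$ is a root of unity other than $1$ (or more generally a unimodular number that is not $1$). So the whole problem reduces to a careful analysis of \emph{which} $\mu_k$ equal $1$.

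\textbf{Key steps, in order.} First I would record the circulant diagonalization and reduce convergence of $\bC^n$ to convergence of each scalar sequence $\mu_k^n$, as above. Second, I would characterize equality $\mu_k = 1$: since $\mu_k = \sum_{j\in U'}\alpha_j\omega^{jk}$ with $U' = \{j : \alpha_j > 0\}$ a set of unit vectors with positive weights summing to $1$, equality with $1$ forces $\omega^{jk} = 1$ for every $j\in U'$, i.e.\ $m \mid jk$ for all $j\in U'$, equivalently $(m/\gcd(m,k))\mid j$ for all $j\in U'$. Translating the indexing convention of the fact ($U = \{k\in\{1,\ldots,m\}: \alpha_k>0\}$ with $\alpha_m = \alpha_0$), this says $\mu_k = 1 \iff (m/\gcd(m,k)) \mid \gcd(U\cup\{m\})$, and one checks $\gcd(U\cup\{m\}) = \gcd((U-U)\cup\{m\})$ because $m\in U\cup\{m\}$ already. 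Third, I would show no stray unimodular non-root-of-unity $\mu_k$ can occur: each $\mu_k$ is an algebraic number lying in the cyclotomic field $\QQ(\omega)$ and has modulus $\le 1$; if $|\mu_k| = 1$ then equality in the triangle inequality forces all the $\omega^{jk}$ ($j\in U'$) to be equal, hence $\mu_k$ itself is a root of unity. Therefore $\lim_\nnn\bC^n$ exists iff every $\mu_k$ with $|\mu_k|=1$ actually equals $1$, which by Step 2 happens iff $\gcd(m,k)\cdot\gcd(U\cup\{m\}) \equiv 0 \pmod m$ fails to force $\mu_k=1$ only when $\mu_k\neq 1$ — cleanly: iff $\gamma := \gcd(U\cup\{m\})$ satisfies $\gamma = \gcd((U-U)\cup\{m\})$ (the stated condition rules out the pathological $\mu_k$'s of modulus one that are not $1$). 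Fourth, to compute the limit: let $S = \{k : \mu_k = 1\}$, which by Step 2 is exactly $\{k\in\{0,\ldots,m-1\} : \gamma \mid k\}$ where $\gamma$ divides $m$, so $|S| = m/\gamma$. Then $\lim_\nnn\bC^n = F^*\operatorname{diag}(\mathbf 1_S)F$, and computing this entrywise gives $\bL_{i,j} = \frac1m\sum_{k\in S}\omega^{(i-j)k} = \frac1m\sum_{t=0}^{m/\gamma - 1}\omega^{(i-j)\gamma t}$, a geometric sum equal to $(1/m)\cdot(m/\gamma)$ when $\gamma\mid(i-j)$ and $0$ otherwise, i.e.\ $\bL_{i,j} = \gamma/m$ if $i\equiv j\pmod\gamma$ and $0$ otherwise, exactly as claimed.

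\textbf{Main obstacle.} The delicate point is Step 3 together with the exact combinatorial translation in Step 2: proving that modulus-one eigenvalues $\mu_k$ are forced to be roots of unity (the triangle-inequality rigidity argument), and then correctly bookkeeping the gcd condition so that ``$\mu_k=1$ for all $k$ with $|\mu_k|=1$'' matches the stated criterion $\gamma = \gcd(U\cup\{m\}) = \gcd((U-U)\cup\{m\})$. The equality $\gcd(U\cup\{m\}) = \gcd((U-U)\cup\{m\})$ is automatic once $m$ is thrown into both sets, so the real content is showing that when this common value is $\gamma$, no eigenvalue of modulus $1$ other than the value $1$ survives; this requires the observation that a non-principal unimodular $\mu_k$ would force the support $U'$ to lie in a single coset modulo $m/\gcd(m,k)$ with $\gcd(m,k) < \gamma$ dividing all of $U$, contradicting minimality of $\gamma$ — hence the only subtlety is chasing these divisibility relations carefully. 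Everything else (the circulant/DFT setup and the geometric-sum evaluation of $\bL$) is routine.
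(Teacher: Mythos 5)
The paper itself offers no proof of this Fact (it is quoted from Krafft--Schaefer), so your spectral route is the natural one to supply: diagonalize the circulant by the DFT, note $\mu_k=\sum_j\alpha_j\omega^{jk}$, reduce convergence of $\bC^n$ to convergence of each $\mu_k^n$, use triangle-inequality rigidity to handle unimodular eigenvalues, and evaluate the limit by a geometric sum. That skeleton is sound. However, your write-up contains a genuine error at exactly the point where the content of the ``if and only if'' lives: you assert (twice) that $\gcd(U\cup\{m\})=\gcd((U-U)\cup\{m\})$ holds automatically because $m$ is in both sets. This is false, and if it were true the criterion would be vacuous and $\lim_\nnn\bC^n$ would always exist, which it does not. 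Take $m=2$, $\alpha_0=0$, $\alpha_1=1$: then $U=\{1\}$, $\gcd(U\cup\{2\})=1$, while $U-U=\{0\}$ gives $\gcd((U-U)\cup\{2\})=2$, and indeed $\bC=\bigl(\begin{smallmatrix}0&1\\1&0\end{smallmatrix}\bigr)$ has non-convergent powers. The correct bookkeeping is: $|\mu_k|=1$ iff $\omega^{k(j-j')}=1$ for all $j,j'$ in the support (a condition governed by $U-U$, equivalently $(m/\gcd((U-U)\cup\{m\}))\mid k$), whereas $\mu_k=1$ iff $\omega^{kj}=1$ for all $j$ in the support (governed by $U$, equivalently $(m/\gamma)\mid k$ with $\gamma=\gcd(U\cup\{m\})$). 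The limit exists iff every unimodular $\mu_k$ equals $1$, i.e.\ iff these two index sets coincide, i.e.\ iff the two gcds are equal --- that equality is the theorem's hypothesis, not a triviality.

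There is a second, related slip in your limit computation: the set $S=\{k:\mu_k=1\}$ is $\{k:(m/\gamma)\mid k\}$, of cardinality $\gamma$, not $\{k:\gamma\mid k\}$ of cardinality $m/\gamma$ as you state; as written your geometric sum would give the value $1/\gamma$ on the condition $(m/\gamma)\mid(i-j)$, which is inconsistent with the $\gamma/m$ and $\gamma\mid(i-j)$ you then assert. With the correct $S$ one gets $\bL_{i,j}=\tfrac1m\sum_{t=0}^{\gamma-1}\omega^{(i-j)(m/\gamma)t}$, which equals $\gamma/m$ precisely when $\gamma\mid(i-j)$ and $0$ otherwise, as claimed in the Fact. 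So the final formula is recoverable, but Steps 2--4 need their divisibility bookkeeping redone, and the claim that the gcd equality is automatic must be removed, since it destroys the ``only if'' direction.
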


\begin{remark}
Note that $\bC$ is \emph{bistochastic}, i.e., both $\bC$ and $\bC^*$ are
stochastic. Since permutation matrices are clearly nonexpansive,
it follows from Birkhoff's theorem
(see, e.g., \cite[Theorem~8.7.1]{Horn})
that $\bC$ is convex combination of nonexpansive matrices.
Hence, $\bC$ is nonexpansive as well. 
Moreover, one verifies readily that $\Fix\bC = \bD$.
\end{remark}

\begin{example}
Suppose there exists $i\in\{1,\ldots,m-1\}$ such that
$\{i,i+1\}\subseteq U$. Then $(\bC^n)_\nnn$ converges to the 
orthogonal projector onto $\bD$, i.e., 
\begin{equation}
\lim_\nnn \bC^n = 
\frac{1}{m}\bee \bee^{*}
=\frac{1}{m}\begin{pmatrix}
1 & 1& \cdots &1 \\
1 & 1 & \cdots & 1\\
\vdots & &  & \\
1 & 1& \cdots &1
\end{pmatrix}.
\end{equation}
\end{example}

For further results on circulant matrices, see  \cite{chou} 
and in particular \cite{Davis}.

\section{Main results}\label{s:mainresults}

In this section, we study the convergence of the 
Gauss--Seidel type fixed point iteration
scheme proposed in \cite{BWW,BWWSIOPT}.
Recalling \eqref{e:alphanumb}, we set 
\begin{subequations}
\label{e:defofTk}
\begin{align}
\bT_1 &=
\begin{pmatrix}
\alpha_{0} & \alpha_{1} & \alpha_{2} & \cdots & \alpha_{m-1}\\
0 & 1 & 0 & \cdots & 0\\
0 & 0 & 1 & \cdots & 0\\
\vdots & & & & \\
0 & 0 & 0 & \cdots & 1
\end{pmatrix},\quad
\bT_2 =
\begin{pmatrix}
1 & 0 & 0 & \cdots & 0\\
\alpha_{m-1} & \alpha_{0} & \alpha_{1} & \cdots & \alpha_{m-2}\\
0 & 0 & 1 & \cdots & 0\\
\vdots & & & & \\
0 & 0 & 0 & \cdots & 1
\end{pmatrix},\\
&\;\; \vdots\nonumber\\
\bT_m &=
\begin{pmatrix}
1 & 0 & 0 & \cdots & 0\\
0 & 1 & 0 & \cdots & 0\\
0 & 0 & 1 & \cdots & 0\\
\vdots & & & & \\
\alpha_{1} & \alpha_{2} & \alpha_{3} & \cdots & \alpha_{0}
\end{pmatrix};
\end{align}
\end{subequations}
or entrywise 
\begin{equation}
\big(\bT_k\big)_{i,j} =
\begin{cases}
\alpha_{m-k+j}, &\text{if $i=k$ and $1\leq j< k$;}\\
\alpha_{j-k}, &\text{if $i=k$ and $m\geq j\geq k$;}\\
1, &\text{if $i\neq k$ and $j=i$;}\\
0, &\text{otherwise.}
\end{cases}
\end{equation}
Furthermore, we define
\boxedeqn{\label{e:defofbT}\bT=\bT_{m}\cdots \bT_{1}.}

When $\alpha_{0}=0, \alpha_{1}=\cdots=\alpha_{m-1}=\frac{1}{m-1}$, 
iterating $\bT$ exactly corresponds to the
algorithm investigated in \cite{BWW, BWWSIOPT}
when all monotone operators are zeros. 
The authors there observed the numerical convergence but were not able
to provide a rigorous proofs. 
We shall present a rigorous proof by connecting $\bT$ and $\bA$.
We start with some basic properties.

\begin{proposition}
\label{p:tk}
Let $k\in\{1,\ldots,m\}$. 
Then the following hold:
\begin{enumerate}
\item\label{t:tk1}
$\bT_{k}$ and $\bT$ are stochastic.
\item\label{t:tk2}
If $\alpha_{0}=0$,
then neither $\bT_{k}$ nor $\bT$ is nonexpansive.
\item\label{t:ttk} 
If $\alpha_{0}=0$, then neither $\bT_{k}$ nor $\bT$
is irreducible.
\end{enumerate}
\end{proposition}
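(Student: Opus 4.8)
The plan is to verify each item by direct inspection of the matrices $\bT_k$ in \eqref{e:defofTk}, using the entrywise formula, and then to lift each property from $\bT_k$ to the product $\bT=\bT_m\cdots\bT_1$.

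For \ref{t:tk1}, I would simply observe that each $\bT_k$ has nonnegative entries (the $\alpha_i$ are nonnegative by \eqref{e:alphanumb}) and that each row sum equals $1$: rows $i\neq k$ contain a single $1$ and zeros, while row $k$ has entries that are a permutation of $\alpha_0,\ldots,\alpha_{m-1}$, whose sum is $1$ by \eqref{e:alphanumb}. Hence each $\bT_k$ is stochastic. Since the product of stochastic matrices is stochastic --- nonnegativity is preserved under multiplication, and $\bT_k\bee=\bee$ for each $k$ gives $\bT\bee=\bee$ --- it follows that $\bT$ is stochastic as well.

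For \ref{t:tk2}, I would exploit Proposition~\ref{p:reducible}'s underlying observation: when $\alpha_0=0$, the first column of $\bT_m$ is $(1,0,\ldots,0,\alpha_1)^*$ which is fine, but the key point is that row $k$ of $\bT_k$ becomes $(\alpha_{m-k+1},\ldots,\alpha_{m-1},0,\alpha_1,\alpha_2,\ldots,\alpha_{m-1})$ --- wait, more carefully, with $\alpha_0=0$ the diagonal entry $(\bT_k)_{k,k}=\alpha_0=0$, so column $k$ of $\bT_k$ is the zero vector when $k\geq 2$ (and for $k=1$ column $1$ is also zero since $(\bT_1)_{1,1}=\alpha_0=0$ and all other entries of column $1$ vanish). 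A stochastic matrix with a zero column cannot be nonexpansive in the Euclidean norm: if column $j$ is zero, then $\bT_k\bee_j=0$ and $\bT_k(\bee_j-\tfrac1m\bee) $ has norm... I would instead argue cleanly that a zero column forces $\bT_k$ to be singular, hence $0\in\sigma(\bT_k)$; combined with $\rho(\bT_k)=1$ (stochastic) and the fact that $\bT_k$ has at least two rows equal to standard basis vectors when $m\geq 3$, one shows $\|\bT_k\bx\|>\|\bx\|$ for a suitable $\bx$. Actually the slickest route: a stochastic matrix is nonexpansive in $\|\cdot\|_2$ iff it is doubly stochastic (column sums also $1$); since $\bT_k$ has a zero column, its columns cannot all sum to $1$, so it is not doubly stochastic, hence not nonexpansive. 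The same zero-column observation applies to $\bT$: I would check that $\bT=\bT_m\cdots\bT_1$ also has a zero column when $\alpha_0=0$ (the column that is annihilated by $\bT_1$ propagates, since $\bT_1\bee_1=0$ implies $\bT\bee_1 = \bT_m\cdots\bT_2\,\mathbf{0}=0$), so $\bT$ is not doubly stochastic and hence not nonexpansive.

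For \ref{t:ttk}, this is now immediate from Proposition~\ref{p:reducible}: when $\alpha_0=0$ each $\bT_k$ has a zero column (as just noted, column $k$), hence $\bT_k$ is reducible; and $\bT$ has a zero column (column $1$, since $\bT\bee_1=0$), hence $\bT$ is reducible too.

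\textbf{Anticipated obstacle.} The only genuinely non-routine point is \ref{t:tk2}: asserting ``stochastic and not doubly stochastic $\Rightarrow$ not Euclidean-nonexpansive'' requires a short argument (for a doubly stochastic matrix $\bB$, $\bB^*\bB$ is doubly stochastic, so $\|\bB\bx\|^2=\bx^*\bB^*\bB\bx\le\rho(\bB^*\bB)\|\bx\|^2\le\|\bx\|^2$; conversely if $\bB$ is stochastic but some column sums to $c>1$, evaluate $\|\bB^*\bee\|$ versus $\|\bee\|$ since $\bB$ nonexpansive forces $\bB^*$ nonexpansive, contradiction). I would include this as a one-line lemma or fold it into the proof. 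Everything else is bookkeeping with the explicit entries, and lifting from $\bT_k$ to $\bT$ uses only that $\bT_1\bee_1=\mathbf 0$ when $\alpha_0=0$, which is visible directly from the first display in \eqref{e:defofTk}.
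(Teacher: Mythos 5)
Your proposal is correct, and parts \ref{t:tk1} and \ref{t:ttk} coincide with the paper's argument (product of stochastic matrices; zero $k$th column of $\bT_k$ and zero first column of $\bT$, then Proposition~\ref{p:reducible}). Where you genuinely diverge is \ref{t:tk2}: the paper does not go through double stochasticity at all, it simply exhibits an expanded vector --- since $\alpha_0=0$, row $k$ of $\bT_k$ has a zero in position $k$, so $\bT_k(\bee-\bee_k)=\bee$ and $\|\bee\|^2=m>m-1=\|\bee-\bee_k\|^2$, and likewise $\bT(\bee-\bee_1)=\bee$ --- a two-line computation with no auxiliary lemma. Your route via ``a stochastic matrix is Euclidean-nonexpansive iff it is doubly stochastic'' is valid and buys more: it shows that \emph{any} stochastic matrix whose column sums are not all equal to $1$ fails to be nonexpansive, and it unifies \ref{t:tk2} and \ref{t:ttk} through the single zero-column observation. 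The cost is the lemma you only sketch: to close it you must note that the column sums $c_j$ of a stochastic $\bB$ satisfy $c_j\geq 0$ and $\sum_j c_j=m$, so $\|\bB^*\bee\|^2=\sum_j c_j^2\geq\bigl(\sum_j c_j\bigr)^2/m=m=\|\bee\|^2$ with equality iff all $c_j=1$; merely saying ``some column sums to $c>1$'' does not by itself give $\|\bB^*\bee\|>\|\bee\|$ without this Cauchy--Schwarz equality step (together with $\|\bB^*\|=\|\bB\|$). With that one line made explicit, your proof is complete, just less elementary than the paper's direct computation.
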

\begin{proof}
\ref{t:tk1}: 
By \eqref{e:alphanumb}, $\bT_{k}$ is stochastic and so is
therefore $\bT$ as a product of stochastic matrices. 

\ref{t:tk2}: 
Indeed, in this case $\bT_k(\bee-\bee_k) = \bee$ and
thus $\|\bT_k(\bee-\bee_k)\|^2=m>m-1 =\|\bee-\bee_k\|^2$. 
Similarly, $\bT(\bee-\bee_1)=\bee$ and 
thus $\|\bT(\bee-\bee_1)\|^2 = m > m-1 = \|\bee-\bee_1\|^2$.

\ref{t:ttk}: 
In this case, the $\bT_1$ and $\bT_k$ have a zero column, hence so does
$\bT$. Now apply Proposition~\ref{p:reducible}.
\end{proof}

Proposition~\ref{p:tk} illustrates neither the theory of nonexpansive
mappings nor that of irreducible matrices (as is done in, e.g.,
\cite{BBS,Meyer}) is applicable to study limiting
properties of $(\bT^n)_\nnn$. Fortunately, we are able to base our
analysis on the 
right-shift and left-shift operators which are respectively given by
\begin{equation}
R=\begin{pmatrix}
0 & 0& 0 & \cdots &  1\\
1 & 0 & 0 & \cdots & 0\\
0 & 1 & 0 &\cdots & 0\\
\vdots  & & \ddots & \ddots &\\
  0 & \cdots & 0 & 1 & 0
\end{pmatrix},
\qquad L=\begin{pmatrix}
0 & 1 & 0 & \cdots &  0\\
0 & 0 & 1 & \cdots & 0\\
\vdots &  &\ddots & \ddots & \vdots &\\
 0 &  0& 0& \cdots & 1\\
  1 & 0 & 0 & \cdots & 0
\end{pmatrix}.
\end{equation}
Note that $R$ and $L$ are permutation matrices which satisfy
the following properties which we will use repeatedly:
\begin{equation}
R^m=L^m=\Id,\; R^{*}=R^{-1}=L,\; L^{*}=L^{-1}=R,\;R^{m-k}=L^{k},\;
\text{where $k\in\{0,\ldots,m\}$. }
\end{equation}

A key observation is the following result which connects $\bT_{k}$ and the companion matrix
$\bA$.

\begin{proposition}\label{p:tkproduct} 
For every $k\in\{1,\ldots,m\}$, we have
\begin{subequations}
\begin{align}\label{e:inA}
\bT_{k}&=R^{k}\bA L^{k-1},\\
\label{e:tk}
\bT_{k}\bT_{k-1}\cdots \bT_{1}&=R^{k}\bA^{k},\\
\label{e:tm}
\bT&=\bA^{m}.
\end{align}
\end{subequations}
\end{proposition}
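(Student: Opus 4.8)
The plan is to prove \eqref{e:inA} first by a direct entrywise comparison, and then derive \eqref{e:tk} and \eqref{e:tm} as formal consequences using the permutation-matrix identities $R^m=L^m=\Id$, $R^{-1}=L$, $L^{-1}=R$ and $R^{m-k}=L^k$. For \eqref{e:inA}, the key is to understand what conjugating/multiplying by the shifts does to $\bA$: right-multiplication of $\bA$ by $L^{k-1}$ cyclically permutes the \emph{columns} of $\bA$, and left-multiplication by $R^{k}$ cyclically permutes the \emph{rows}; the combined effect should move the single nontrivial row of $\bA$ (its bottom row, carrying the $\alpha_i$'s) into row $k$, and rearrange its entries so that they land exactly in the positions prescribed by the entrywise formula for $\bT_k$, namely $\alpha_{m-k+j}$ for $1\le j<k$ and $\alpha_{j-k}$ for $j\ge k$, while the other rows of $R^k\bA L^{k-1}$ become the corresponding rows of the identity. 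Concretely I would recall that $R$ sends $\bee_i\mapsto\bee_{i+1}$ (indices mod $m$) and check the action of $R^k\bA L^{k-1}$ on each standard basis vector $\bee_j$, using $\bA\bee_j=\bee_{j-1}$ for $j\ge 2$ and $\bA\bee_1=\alpha_0\bee_m$ together with the bottom-row structure; tracking where the $\alpha$'s go reduces to bookkeeping of indices modulo $m$ and matching against the stated cases for $(\bT_k)_{i,j}$.

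Once \eqref{e:inA} is in hand, \eqref{e:tk} follows by induction on $k$. The base case $k=1$ is $\bT_1=R\bA L^0=R\bA$, which is \eqref{e:inA}. For the inductive step, assuming $\bT_{k-1}\cdots\bT_1=R^{k-1}\bA^{k-1}$, I would write
\begin{equation}
\bT_k\bT_{k-1}\cdots\bT_1 = \big(R^k\bA L^{k-1}\big)\big(R^{k-1}\bA^{k-1}\big)
= R^k\bA\,\big(L^{k-1}R^{k-1}\big)\bA^{k-1} = R^k\bA^k,
\end{equation}
since $L^{k-1}R^{k-1}=(R^{-1})^{k-1}R^{k-1}=\Id$. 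Finally, \eqref{e:tm} is the case $k=m$ of \eqref{e:tk}: $\bT=\bT_m\cdots\bT_1=R^m\bA^m=\bA^m$ because $R^m=\Id$.

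The main obstacle is purely the first step — getting the index arithmetic in \eqref{e:inA} exactly right, in particular being careful about whether the shifts act on rows or columns and in which direction, and making the modular index shifts in $\bA\bee_1=\alpha_0\bee_m$ mesh correctly with the two-case description of $(\bT_k)_{i,j}$ (the ``wrap-around'' from the $\alpha_{j-k}$ branch to the $\alpha_{m-k+j}$ branch). Everything after that is formal manipulation of the permutation identities. A clean way to organize the verification of \eqref{e:inA} is to note that both sides are stochastic matrices differing from $\Id$ only in row $k$, so it suffices to check that the $k$th rows agree; the $k$th row of $R^k\bA L^{k-1}$ is $\bee_k^*R^k\bA L^{k-1}$, and since $\bee_k^*R^k=\bee_m^*$ (again up to the modular convention), this equals $\bee_m^*\bA L^{k-1}=(\alpha_0,\alpha_1,\ldots,\alpha_{m-1})L^{k-1}$, which is just a cyclic shift of the weight vector — and one then reads off that this shift places $\alpha_{j-k}$ and $\alpha_{m-k+j}$ in the claimed columns.
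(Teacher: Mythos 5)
Your proposal is correct and follows essentially the same route as the paper: both rest on the identity $\bT_k=R^{k}\bA L^{k-1}$ together with the telescoping facts $L^{k-1}R^{k-1}=\Id$ and $R^m=\Id$. The only difference is cosmetic: you verify \eqref{e:inA} by a direct row-by-row computation, whereas the paper obtains it inductively from the one-step relation $\bT_{k+1}=R\,\bT_k L$.
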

\begin{proof} 
We prove this by induction on $k$. 
Clearly, \eqref{e:inA} and \eqref{e:tk} hold when $k=1$.
Now assume that \eqref{e:inA} and \eqref{e:tk} hold for some
$k\in\{1,\ldots,m-1\}$. 
Then 
$\bT_{k+1}= R\bT_{k}L=R(R^{k}AL^{k-1})L=R^{k+1}AL^{k}$,
which is \eqref{e:inA} for $k+1$. 
Hence 
$\bT_{k+1}\cdots \bT_1 = \bT_{k+1}(\bT_{k}\cdots \bT_1) = 
(R^{k+1}\bA L^k)(R^k\bA^k)=R^{k+1}\bA^{k+1}$, which is \eqref{e:tk} for $k+1$.
Finally, \eqref{e:tm} follows from \eqref{e:defofbT} and \eqref{e:tk} with $k=m$. 
\end{proof}

We are now able to derive our main results which resolves a special case of
an open problem posed in \cite{BWWSIOPT}. 

\begin{theorem}[main result]
\label{t:tfix}
Suppose that the {\rm\bf Basic~Hypothesis~\ref{bahy}} holds. 
Then
\begin{equation}
\label{e:thelimit}
\lim_{\nnn}\bT^n = \frac{\bee\ba^*}{\ba^*\bee} = 
\frac{1}{\sum_{k=0}^{m-1}\sum_{i=0}^{k}\alpha_{i}}
\begin{pmatrix}
\alpha_{0} &\sum_{i=0}^{1}\alpha_{i}& \cdots &  \sum_{i=0}^{m-2}\alpha_{i} & 1\\
\alpha_{0} &\sum_{i=0}^{1}\alpha_{i}& \cdots &  \sum_{i=0}^{m-2}\alpha_{i} & 1\\
\vdots & & &  & &\\
\alpha_{0} &\sum_{i=0}^{1}\alpha_{i}& \cdots &  \sum_{i=0}^{m-2}\alpha_{i} & 1
\end{pmatrix} = \bee\boldsymbol{\lambda}^*. 
\end{equation}
and 
$\Fix\bT=\bD = \Fix\bT_1\cap\cdots\cap \Fix\bT_m$.
\end{theorem}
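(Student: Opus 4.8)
The plan is to reduce both assertions to the single identity $\bT=\bA^m$ established in Proposition~\ref{p:tkproduct}, together with the limit of the powers of $\bA$ computed in Corollary~\ref{c:limbA}. For the limit formula \eqref{e:thelimit}, since $\bT=\bA^m$ we have $\bT^n=\bA^{mn}$ for every $\nnn$. By Corollary~\ref{c:limbA}, under the Basic Hypothesis the full sequence $(\bA^k)_\nnn$ converges to $\bee\ba^*/(\ba^*\bee)=\bee\boldsymbol{\lambda}^*$; hence so does every subsequence, in particular $(\bA^{mn})_\nnn$, and therefore $\lim_\nnn\bT^n=\bee\ba^*/(\ba^*\bee)=\bee\boldsymbol{\lambda}^*$. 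No further computation is needed for this part.

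For the fixed-point identities I would establish the chain of inclusions
$\bD\subseteq\Fix\bT_1\cap\cdots\cap\Fix\bT_m\subseteq\Fix\bT\subseteq\bD$
and conclude that all four sets coincide. The first inclusion holds because each $\bT_k$ is stochastic by Proposition~\ref{p:tk}\ref{t:tk1}, so $\bT_k\bee=\bee$ and thus $\bT_k$ fixes every point of $\bD$. The second inclusion is immediate from $\bT=\bT_m\cdots\bT_1$: a vector fixed by each factor is fixed by the composition. For the last inclusion, take $\bx\in\Fix\bT$; then $\bx=\bT^n\bx$ for every $\nnn$, and letting $\nnn\to\infty$ and using the limit just obtained gives $\bx=(\bee\boldsymbol{\lambda}^*)\bx=(\boldsymbol{\lambda}^*\bx)\bee\in\bD$.

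I do not expect a genuine obstacle here, as the substance is entirely contained in Proposition~\ref{p:tkproduct} and Corollary~\ref{c:limbA}. The only step that merits a remark is the last inclusion $\Fix\bT\subseteq\bD$, i.e.\ $\Fix\bA^m=\Fix\bA$. One can also argue this spectrally: by the spectral mapping theorem the eigenvalues of $\bA^m$ are the $m$-th powers of those of $\bA$, so under the Basic Hypothesis $1$ is the unique eigenvalue of $\bT=\bA^m$ of modulus at least $1$ and it is simple with eigenvector $\bee$, whence $\Fix\bT=\ker(\Id-\bA^m)=\spa\{\bee\}=\bD$. Since invoking the already-computed limit is shorter, that is the route I would take in the write-up.
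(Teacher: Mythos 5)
Your proposal is correct and follows essentially the same route as the paper: the limit is obtained from $\bT=\bA^m$ (Proposition~\ref{p:tkproduct}) together with Corollary~\ref{c:limbA}, and the fixed-point identities come from combining the limit matrix (to get $\Fix\bT\subseteq\bD$) with the fact that $\bee$, and hence $\bD$, is fixed by each stochastic factor $\bT_k$. The only cosmetic difference is that the paper deduces $\bD\subseteq\Fix\bT$ directly from $\Fix\bA\subseteq\Fix\bA^m$ and phrases $\Fix\bT\subseteq\bD$ via $\ran(\bee\boldsymbol{\lambda}^*)$, while you chain through $\Fix\bT_1\cap\cdots\cap\Fix\bT_m$; the substance is identical.
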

\begin{proof}
In view of \eqref{e:tm}, it is clear that $(\bT^n)_\nnn$ is a
subsequence of $(\bA^n)_\nnn$. 
Hence \eqref{e:thelimit} follows from Corollary~\ref{c:limbA}. 

Now set $\bL = \lim_\nnn \bT^n = \ba^*\bee/(\bee^*\ba)$. 
On the one hand, $\bD = \Fix \bA \subseteq \Fix \bT$
by \eqref{e:FixA} and \eqref{e:tm}. 
On the other hand, 
$\Fix\bT \subseteq\ran\bL\subseteq\bD$. 
Altogether, $\Fix\bT=\bD$.
Finally, clearly $\Fix\bT_1\cap\cdots \Fix\bT_m\subseteq \Fix\bT =\bD
\subseteq\Fix\bT_1\cap \cdots\cap\Fix\bT_m$. 
\end{proof}

\begin{remark}
Theorem~\ref{t:tfix}, with the choice
$(\alpha_0,\ldots,\alpha_{m-1}) = (m-1)^{-1}(0,1,1,\ldots,1)$, settles
\cite[Questions \textbf{Q1} and \textbf{Q2} on p.~36]{BWWSIOPT} 
when all resolvents coincide with $\Id$.
Furthermore, \eqref{e:thelimit} gives an \emph{explicit formula} for the limit
of $(\bT^n)_\nnn$. 
\end{remark}

\section{A special case: $\alpha_{0}=0$ and $\alpha_{1}=\cdots=\alpha_{m-1}=1/(m-1)$}\label{s:specialcase}

This assignment of parameters was the setting of \cite{BWWSIOPT}.
In this case, even closed forms for $\bT$,
and for the partial products
$\bT_{k}\bT_{k-1}\cdots\bT_{1}$ and $A^{k}$, are available
as we demonstrate in this section.
To this end, we abbreviate 
\begin{equation}
\gamma = \frac{1}{m-1}.
\end{equation}
Then \eqref{e:defofTk} and \eqref{e:companionm} turn into 
\begin{subequations}
\begin{align}
\bT_1 &=
\begin{pmatrix}
0 & \gamma & \gamma & \cdots & \gamma\\
0 & 1 & 0 & \cdots & 0\\
0 & 0 & 1 & \cdots & 0\\
\vdots & & & & \\
0 & 0 & 0 & \cdots & 1
\end{pmatrix},\qquad
\bT_2 =
\begin{pmatrix}
1 & 0 & 0 & \cdots & 0\\
\gamma & 0 & \gamma & \cdots & \gamma\\
0 & 0 & 1 & \cdots & 0\\
\vdots & & & & \\
0 & 0 & 0 & \cdots & 1
\end{pmatrix},\\
& \ldots,\nonumber\\
\bT_m &=
\begin{pmatrix}
1 & 0 & 0 & \cdots & 0\\
0 & 1 & 0 & \cdots & 0\\
0 & 0 & 1 & \cdots & 0\\
\vdots & & & & \\
\gamma & \gamma & \gamma & \cdots & 0
\end{pmatrix}, 
\text{~and~}
\bA =
\begin{pmatrix}
0 & 1 & 0 & \cdots & 0\\
0 & 0 & 1 & \cdots & 0\\
0 & 0 & 0 & \cdots & 0\\
\vdots & & & & \\
0 & \gamma& \gamma & \cdots & \gamma
\end{pmatrix}.
\end{align}
\end{subequations}

\begin{proposition}
\label{l:1}
Define $\wT\in\RR^{m\times m}$ entrywise by
\begin{equation}
\label{e:T}
\wT_{i,j} :=
\begin{cases}
\displaystyle \gamma (1+\gamma)^{i-1}, &\text{if $i<j$;}\\[+5 mm]
\displaystyle \gamma(1+\gamma)^{i-1}- \gamma (1+\gamma)^{i-j}, &
\text{if $i\geq j$.}
\end{cases}
\end{equation}
Let $k\in\{1,\ldots,m\}$. Then 
\begin{equation}
\label{e:Sk}
\bT_k\bT_{k-1}\cdots\bT_1 =
\begin{pmatrix}
\text{first $k$ rows of $\wT$}\\[+2mm]
0_{(m-k)\times k} \quad I_{m-k}
\end{pmatrix};
\end{equation}
or entrywise
\begin{equation}
\label{e:0416b}
\big(\bT_k\bT_{k-1}\cdots\bT_1\big)_{i,j} :=
\begin{cases}
\wT_{i,j}, &\text{if $1\leq i \leq k$;}\\
1, &\text{if $k+1\leq i\leq m$ and $j=i$;}\\
0, &\text{if $k+1\leq i\leq m$ and $j\neq i$.}
\end{cases}
\end{equation}
In particular,
\begin{equation}
\bT=\wT.
\end{equation}
\end{proposition}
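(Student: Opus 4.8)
The plan is to prove \eqref{e:Sk}, or equivalently \eqref{e:0416b}, by induction on $k$, since the ``in particular'' statement $\bT=\wT$ is just the case $k=m$ of \eqref{e:Sk} (recall $\bT=\bT_m\cdots\bT_1$ and that the block $0_{(m-k)\times k}\ I_{m-k}$ is empty when $k=m$). First I would record the structural fact that makes the induction work: for any matrix $M$ whose rows $k+1,\ldots,m$ form the bottom block of the identity (i.e. $M_{i,j}=\delta_{i,j}$ for $i>k$), left-multiplying by $\bT_{k+1}$ leaves rows $1,\ldots,k$ and rows $k+2,\ldots,m$ of $M$ untouched and replaces row $k+1$ by the $\bT_{k+1}$-combination of the rows of $M$. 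This is immediate from the sparsity of $\bT_{k+1}$: outside row $k+1$ it agrees with $\Id$, and its row $k+1$ has entries $\gamma$ in every column except column $k+1$, where the entry is $0$.

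The base case $k=1$: $\bT_1\bT_0\cdots\bT_1$ is just $\bT_1$ (empty product convention, or start the induction at $k=1$ directly), and one checks entrywise that row $1$ of $\bT_1$ is $(0,\gamma,\gamma,\ldots,\gamma)$, which matches $\wT_{1,1}=\gamma(1+\gamma)^0-\gamma(1+\gamma)^0=0$ and $\wT_{1,j}=\gamma(1+\gamma)^0=\gamma$ for $j>1$; rows $2,\ldots,m$ of $\bT_1$ are the corresponding rows of $I$, as required. For the inductive step, assume \eqref{e:0416b} holds for some $k\in\{1,\ldots,m-1\}$ and write $S_k:=\bT_k\cdots\bT_1$. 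By the structural fact, $S_{k+1}=\bT_{k+1}S_k$ agrees with $S_k$ in all rows except row $k+1$, so rows $1,\ldots,k$ are the first $k$ rows of $\wT$ (by the inductive hypothesis) and rows $k+2,\ldots,m$ are the bottom rows of $I$. It remains to compute row $k+1$ of $S_{k+1}$, which is $\sum_{r} (\bT_{k+1})_{k+1,r}\,(S_k)_{r,\cdot} = \gamma\sum_{r\neq k+1}(S_k)_{r,\cdot}$.

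The heart of the proof is therefore the single algebraic identity: for each column $j$,
\begin{equation}
\gamma\sum_{r=1,\,r\neq k+1}^{m}(S_k)_{r,j} \;=\; \wT_{k+1,j}.
\end{equation}
Using the inductive description of $S_k$, the sum over $r\in\{1,\ldots,k\}$ contributes $\gamma\sum_{r=1}^{k}\wT_{r,j}$, while the sum over $r\in\{k+2,\ldots,m\}$ contributes $\gamma\cdot(\text{number of }r>k+1\text{ with }r=j)$, i.e. $\gamma$ if $j>k+1$ and $0$ otherwise. So I must show $\gamma\sum_{r=1}^{k}\wT_{r,j} + \gamma[j>k+1] = \wT_{k+1,j}$, splitting into the cases $j>k+1$, $j=k+1$, and $j\le k$, and in each case plugging in the closed form \eqref{e:T} for $\wT_{r,j}$. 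This reduces to the finite geometric sums $\sum_{r=1}^{k}\gamma(1+\gamma)^{r-1} = (1+\gamma)^k-1$ and $\sum_{r=j}^{k}\gamma(1+\gamma)^{r-j} = (1+\gamma)^{k-j+1}-1$, after which everything telescopes; I would do this computation once carefully for each case.

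The main obstacle is purely bookkeeping: keeping the three case distinctions ($i<j$ versus $i\ge j$ inside $\wT$, crossed with $j$ above/at/below the pivot $k+1$) straight and verifying that the geometric-sum identities come out exactly right, in particular that the ``$-\gamma(1+\gamma)^{i-j}$'' correction term in the $i\ge j$ branch of \eqref{e:T} is reproduced by the telescoping. There is no conceptual difficulty once the structural observation about $\bT_{k+1}$ acting only on row $k+1$ is in hand; the rest is a verification that the ansatz \eqref{e:T} is the fixed shape propagated by this row-update. I would present the structural observation and the geometric-sum identities as the two lemmatic ingredients and then let the case check run.
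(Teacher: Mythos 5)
Your proposal is correct and follows essentially the same route as the paper's proof: induction on $k$, using the fact that $\bT_{k+1}$ only rewrites the pivot row, and then verifying the new row columnwise via the geometric sums $\sum_{r=1}^{k}\gamma(1+\gamma)^{r-1}=(1+\gamma)^k-1$ and $\sum_{r=j}^{k}\gamma(1+\gamma)^{r-j}=(1+\gamma)^{k-j+1}-1$ in the three cases $j\le k$, $j=k+1$, $j>k+1$ (the paper's Subcases~2.1--2.3, indexed with pivot $k$ instead of $k+1$). The case checks you outline do close correctly, so the argument is complete.
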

\begin{proof}
Set 
\begin{equation}
\bS_k = \bT_k\bT_{k-1}\cdots\bT_1.
\end{equation}
We prove \eqref{e:Sk} by induction on $k\in\{1,\ldots,m\}$;
the rest follows readily. 

$k=1$: On the one hand, by definition,
\begin{equation}
\bT_1 =
\begin{pmatrix}
0 & \gamma & \gamma & \cdots & \gamma\\
0 & 1 & 0 & \cdots & 0\\
0 & 0 & 1 & \cdots & 0\\
\vdots & & & & \\
0 & 0 & 0 & \cdots & 1
\end{pmatrix}.
\end{equation}
On the other hand, the first row of $\wT$ is, using \eqref{e:T},
is $(0,\gamma,\ldots,\gamma)$.
Hence the statement is true for $k=1$.

$k-1\leadsto k$:
Assume the statement is true for $k\in\{2,\ldots,m\}$, i.e.,
\begin{equation}
\bT_{k-1}\cdots\bT_1
= \begin{pmatrix}
\text{first $k-1$ rows of $\wT$}\\[+2mm]
0_{(m-(k-1))\times (k-1)} \quad I_{m-(k-1)}
\end{pmatrix}
= \begin{pmatrix}
\text{first $k-1$ rows of $\wT$}\\[+2mm]
0_{(m-k+1))\times (k-1)} \quad I_{m-k+1};
\end{pmatrix}
\end{equation}
or entrywise, this matrix $\bS$ satisfies
\begin{equation}
(\forall i)(\forall j)\quad
\bS_{i,j} :=
\begin{cases}
\wT_{i,j}, &\text{if $1\leq i \leq k-1$;}\\
1, &\text{if $k\leq i\leq m$ and $j=i$;}\\
0, &\text{if $k\leq i\leq m$ and $j\neq i$.}
\end{cases}
\end{equation}
Recall that
\begin{equation}
(\forall i)(\forall j)\quad
\big(\bT_k\big)_{i,j} =
\begin{cases}
\gamma, &\text{if $i=k$ and $j\neq k$;}\\
1, &\text{if $i\neq k$ and $j=i$;}\\
0, &\text{otherwise.}
\end{cases}
\end{equation}
It is clear that $\bT_k\bS =\bT_{k}\bT_{k-1}\cdots\bT_1$.
We must show that
\begin{equation}
\bT_k\bS \stackrel{?}{=}
\begin{pmatrix}
\text{first $k$ rows of $\wT$}\\[+2mm]
0_{(m-k)\times k} \quad I_{m-k}
\end{pmatrix}.
\end{equation}
We do this entrywise, and thus fix $i$ and $j$ in $\{1,\ldots,m\}$.

\emph{Case~1:} $1\leq i\leq k-1$.\\
Then $(\bT_k\bS)_{i,j} = \sum_{l=1}^{m}(\bT_k)_{i,l}\bS_{l,j} =
(\bT_k)_{i,i}\bS_{i,j} = \bS_{i,j}$, which shows that the first $k-1$
rows of $\bT_k\bS$ are the same as the first $k-1$ rows of $\bS$, which
in turn are the same as the first $k-1$ rows of $\wT$, as required.

\emph{Case~2:} $i=k$.\\
Then $(\bT_k\bS)_{i,j} = (\bT_k\bS)_{k,j} =
\sum_{l=1}^{m}(\bT_k)_{k,l}\bS_{l,j} =
\sum_{l=1}^{k-1}(\bT_k)_{k,l}\bS_{l,j}
+ \sum_{l=k+1}^{m}(\bT_k)_{k,l}\bS_{l,j}
=
\sum_{l=1}^{k-1}\gamma\wT_{l,j}
+ \sum_{l=k+1}^{m}\gamma\bS_{l,j}$.

\emph{Subcase~2.1:} $j\leq k-1$.
Then
$(\bT_k\bS)_{i,j} = (\bT_k\bS)_{k,j}=\sum_{l=1}^{k-1}\gamma\wT_{l,j}
=\gamma\sum_{l=1}^{j-1}\wT_{l,j} + \gamma\sum_{l=j}^{k-1}\wT_{l,j}=
\gamma\sum_{l=1}^{j-1}\gamma (1+\gamma)^{l-1}+
\gamma\sum_{l=j}^{k-1}\big(\gamma (1+\gamma)^{l-1}-\gamma(1+\gamma)^{l-j}\big)
= \gamma^2\sum_{l=1}^{k-1}(1+\gamma)^{l-1} -
\gamma\sum_{l=j}^{k-1}\gamma (1+\gamma)^{l-j}
= \gamma(1+\gamma)^{k-1}-\gamma (1+\gamma)^{k-j}= \wT_{k,j} = \wT_{i,j}$.

\emph{Subcase~2.2:} $j=k$.
Then
$(\bT_k\bS)_{i,j} = (\bT_k\bS)_{k,k}=\sum_{l=1}^{k-1}\gamma\wT_{l,k}
=\gamma\sum_{l=1}^{k-1}\gamma (1+\gamma)^{l-1}=\gamma (1+\gamma)^{k-1}-\gamma
=\wT_{k,k}=\wT_{i,j}$.

\emph{Subcase~2.3:} $k+1\leq j$.
Then $(\bT_k\bS)_{i,j} = (\bT_k\bS)_{k,j}=
\sum_{l=1}^{k-1}\gamma\wT_{l,j} + \gamma
= \gamma\sum_{l=1}^{k-1}\gamma (1+\gamma)^{l-1} + \gamma
= \gamma(1+\gamma)^{k-1}=\wT_{k,j}=\wT_{i,j}$.

\emph{Case~3:} $i\geq k+1$.\\
Then $(\bT_k\bS)_{i,j} = \sum_{l=1}^{m}(\bT_k)_{i,l}\bS_{l,j} =
(\bT_k)_{i,i}\bS_{i,j}= \bS_{i,j}$.

\emph{Subcase~3.1:} $j=i$.
Then $(\bT_k\bS)_{i,j} = \bS_{i,i} = 1$.

\emph{Subcase~3.2:} $j\neq i$.
Then $(\bT_k\bS)_{i,j} = \bS_{i,j} = 0$.

Altogether, we have shown that
\begin{equation}
\bT_k\bS =
\begin{pmatrix}
\text{first $k$ rows of $\wT$}\\[+2mm]
0_{(m-k)\times k} \quad I_{m-k}
\end{pmatrix}.
\end{equation}
The ``In particular'' part is the case when $k=m$.
\end{proof}

\begin{corollary}
Let $k\in\{1,\ldots,m\}$. Then 
\begin{equation}
\bA^k =
\begin{pmatrix}
0_{(m-k)\times k} \quad I_{m-k}\\[+2mm]
\text{first $k$ rows of $\bT$}
\end{pmatrix};
\end{equation}
or entrywise
\begin{equation}
\big(\bA^k\big)_{i,j} =
\begin{cases}
1, &\text{if $1\leq i\leq m-k$ and $j=i+k$;}\\
0, &\text{if $1\leq i\leq m-k$ and $j\neq i+k$;}\\
\bT_{i+k-m,j}, &\text{if $m-k+1\leq i\leq m$.}
\end{cases}
\end{equation}
\end{corollary}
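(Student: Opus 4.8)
The plan is to derive the closed form for $\bA^k$ directly from the closed form for the partial products $\bT_k\bT_{k-1}\cdots\bT_1$ established in Proposition~\ref{l:1}, via the identity $\bT_k\bT_{k-1}\cdots\bT_1=R^k\bA^k$ from \eqref{e:tk}. Since $R^m=\Id$, we have $R^{-k}=R^{m-k}=L^k$, so \eqref{e:tk} rearranges to
\begin{equation}
\bA^k = R^{-k}\,\bT_k\bT_{k-1}\cdots\bT_1 = R^{m-k}\,\bT_k\bT_{k-1}\cdots\bT_1 .
\end{equation}
Thus the corollary will follow once we understand how left-multiplication by $R^{m-k}$ rearranges the rows of the matrix whose block form is given by \eqref{e:Sk}.

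The key elementary observation I would record is that $R$, acting by left-multiplication, is the cyclic downward row-shift: from the sparsity pattern of $R$ one gets $(RM)_{i,\cdot}=M_{i-1,\cdot}$ with the index read modulo $m$ in $\{1,\ldots,m\}$; iterating (and using $R^m=\Id$, $R^{m-k}=L^k$, already listed before Proposition~\ref{p:tkproduct}) gives $(R^{m-k}M)_{i,\cdot}=M_{i+k,\cdot}$ with $i+k$ reduced modulo $m$. In words, $R^{m-k}$ cyclically shifts the rows of $M$ \emph{upward} by $k$.

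Applying this to \eqref{e:Sk}: the matrix $\bT_k\bT_{k-1}\cdots\bT_1$ has its first $k$ rows equal to the first $k$ rows of $\bT=\wT$, and its last $m-k$ rows equal to $\big(0_{(m-k)\times k}\ \ I_{m-k}\big)$, i.e.\ row $k+s$ is $\bee_{k+s}^*$ for $s\in\{1,\ldots,m-k\}$. Shifting all rows upward by $k$ moves the last $m-k$ rows to the top and the first $k$ rows to the bottom, giving exactly
\begin{equation}
\bA^k=\begin{pmatrix}0_{(m-k)\times k}\quad I_{m-k}\\[+2mm]\text{first $k$ rows of $\bT$}\end{pmatrix}.
\end{equation}
The entrywise formula then follows by splitting on the range of $i$: for $1\le i\le m-k$ the shifted row index $i+k$ lies in $\{k+1,\ldots,m\}$, so row $i$ of $\bA^k$ is the unit row $\bee_{i+k}^*$, giving the first two cases; for $m-k+1\le i\le m$ we have $i+k>m$, and the shifted row index $i+k-m$ lies in $\{1,\ldots,k\}$, so row $i$ of $\bA^k$ is the $(i+k-m)$-th row of $\bT$, giving the last case.

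There is no serious obstacle here; the only thing requiring care is keeping straight the \emph{direction} and \emph{amount} of the cyclic shift — it is upward by $k$, equivalently multiplication by $R^{m-k}$ (not $R^k$) — and the modular index bookkeeping in the two ranges of $i$. I would sanity-check the result against $k=1$ (where it must reproduce the displayed form of $\bA$ in this special case) and against $k=m$ (where it must give $\bA^m=\bT$, consistent with \eqref{e:tm}).
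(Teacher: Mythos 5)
Your proposal is correct and follows essentially the same route as the paper: invert \eqref{e:tk} to get $\bA^k=L^k(\bT_k\bT_{k-1}\cdots\bT_1)=R^{m-k}(\bT_k\bT_{k-1}\cdots\bT_1)$ and then apply the closed form of Proposition~\ref{l:1}. The only difference is that you spell out explicitly how the cyclic row shift rearranges the two blocks, which the paper leaves implicit.
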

\begin{proof}
Proposition~\ref{p:tkproduct} yields
$\bA^{k}=L^{k}(\bT_{k}\bT_{k-1}\cdots \bT_{1})$ and the result now follows
from Proposition~\ref{l:1}. 
\end{proof}

\begin{remark}
We do not know whether it is possible to obtain a closed form for the powers of
$\bT$ that does not rely on the eigenvalue analysis from
Section~\ref{s:facts}. If such a formula exists, one may be able to
construct a different proof of Theorem~\ref{t:tfix} in this setting.
\end{remark}



\section{Kolmogorov means}\label{s:fmeans}

We now turn to moving Kolmogorov means, which are sometimes also
called $f$-means.

\begin{theorem}[moving Kolmogorov means]
\label{t:kolmo}
Let $D$ be a nonempty topological Hausdorff space, 
let $Z$ be a real Banach space, 
and let $f\colon D\to Z$ be an injective mapping such that
$\ran f$ is convex. 
Consider the linear recurrence relation 
\begin{subequations}
\begin{equation}
(\forall n\geq m)\quad 
y_{n}=f^{-1}\big(\alpha_{m-1}f(y_{n-1})+\cdots+ \alpha_{0}f(y_{n-m})\big),
\end{equation}
where
\begin{equation}(y_{0},\ldots, y_{m-1})\in D^m.
\end{equation}
\end{subequations}
Then the following hold: 
\begin{enumerate}
\item 
\label{t:kolmo1}
The sequence $(y_n)_\nnn$ is well defined and
the sequence $(f(y_n))_\nnn$ lies in the compact convex set
$\conv\{f(y_0),\ldots,f(y_{m-1})\}$. 
\item
\label{t:kolmo2}
Suppose that the {\rm\bf Basic~Hypothesis~\ref{bahy}} holds and
that $f^{-1}$ is continuous from $f(D)$ to $D$. 
Then
\begin{equation}
\label{e:kolmo2}
\lim_\nnn y_n = f^{-1}\bigg(\sum_{k=0}^{m-1}\lambda_kf(y_k)\bigg) = 
f^{-1}\bigg(
\frac{\sum_{k=0}^{m-1}a_kf(y_k)}{\sum_{k=0}^{m-1}a_k }\bigg)
=f^{-1}\bigg(\frac{\sum_{k=0}^{m-1}\sum_{i=0}^{k}\alpha_i
f(y_k)}{\sum_{k=0}^{m-1}\sum_{i=0}^{k}\alpha_i}\bigg). 
\end{equation}
\end{enumerate}
\end{theorem}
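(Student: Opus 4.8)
The plan is to reduce the Kolmogorov-mean statement to the linear recurrence results already proved in Section~\ref{s:facts}, by transporting everything through the bijection $f\colon D\to\ran f$. The key observation is that if we set $(\forall\nnn)$ $z_n = f(y_n)$, then the recurrence defining $y_n$ is equivalent to the \emph{linear} recurrence
\begin{equation*}
(\forall n\geq m)\quad z_n = \alpha_{m-1}z_{n-1}+\cdots+\alpha_0 z_{n-m}
\end{equation*}
in the Banach space $Z$, with initial data $(z_0,\ldots,z_{m-1}) = (f(y_0),\ldots,f(y_{m-1}))\in Z^m$. So the first step is to verify well-definedness: by Remark~\ref{r:0614b} applied in $Z$, each $z_n$ lies in $\conv\{z_0,\ldots,z_{m-1}\}\subseteq\ran f$, so $y_n = f^{-1}(z_n)$ is meaningful and the sequence $(y_n)_\nnn$ is well defined; moreover $\conv\{f(y_0),\ldots,f(y_{m-1})\}$ is compact (a convex hull of finitely many points in a Banach space, \cite[Corollary~5.30]{AliBor}) and convex, which is exactly \ref{t:kolmo1}.

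For \ref{t:kolmo2}, under the Basic~Hypothesis I would invoke Corollary~\ref{c:generalcase} with $Y=Z$: the sequence $(z_n)_\nnn$ converges strongly in $Z$ to
\begin{equation*}
z := \sum_{k=0}^{m-1}\lambda_k f(y_k) = \frac{\sum_{k=0}^{m-1}a_k f(y_k)}{\sum_{k=0}^{m-1}a_k}.
\end{equation*}
Since each $z_n\in\conv\{f(y_0),\ldots,f(y_{m-1})\}$ and that set is closed, the limit $z$ lies in it as well, hence $z\in\ran f$ and $f^{-1}(z)$ makes sense. Now apply the hypothesis that $f^{-1}$ is continuous on $f(D)=\ran f$: from $z_n\to z$ we get $y_n = f^{-1}(z_n)\to f^{-1}(z)$, which is precisely \eqref{e:kolmo2} once one rewrites $z$ in the three equivalent forms (these are just the identities already recorded in Corollary~\ref{c:generalcase}).

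The only genuinely delicate point is making sure $f^{-1}$ is applied only to points where it is defined, i.e.\ that the relevant $Z$-valued quantities never leave $\ran f$; this is handled entirely by the convexity of $\ran f$ together with the convex-hull containment of Remark~\ref{r:0614b} and the closedness of the compact convex hull. Everything else is a transcription of the linear theory through $f$, so I do not expect any real obstacle; I would simply be careful to state explicitly that $D$ being merely Hausdorff (not metric) is fine because convergence $y_n\to f^{-1}(z)$ follows from continuity of $f^{-1}$ at the single point $z$ together with convergence of $z_n$ in the Banach space $Z$.
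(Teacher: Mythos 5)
Your proposal is correct and follows essentially the same route as the paper: transport the recurrence through $f$ to get a linear recurrence in $Z$, use the convexity of $\ran f$ (inductively, as in Remark~\ref{r:0614b}) for well-definedness, invoke Corollary~\ref{c:generalcase} for strong convergence of $(f(y_n))_\nnn$ to $\sum_{k}\lambda_k f(y_k)\in\ran f$, and conclude via continuity of $f^{-1}$. No gaps.
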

\begin{proof}
\ref{t:kolmo1}: This is similar to Remark~\ref{r:0614b} and proved
inductively.

\ref{t:kolmo2}: 
By Corollary~\ref{c:generalcase},
$(f(y_n))_\nnn$ converges to $\ell(f(y_0),\ldots,f(y_{m-1}))$,
which belongs to $\conv\{f(y_0),\ldots,f(y_{m-1})\}\subseteq
\conv f(D) = f(D) = \ran f$ by assumption and \ref{t:kolmo1}. 
Since $f^{-1}$ is continuous, the result follows from \eqref{e:0614a}. 
\end{proof}

Theorem~\ref{t:kolmo}\ref{t:kolmo2} allows for a universe of examples
by appropriately choosing $f$ and $(\alpha_0,\ldots,\alpha_{m-1})$. 
Let us present some classical moving means on (subsets) of the real
line with the equal weights $\alpha_0=\cdots=\alpha_{m-1}=1/m$.
Note that the {\rm\bf Basic~Hypothesis~\ref{bahy}} is satisfied. 

\begin{corollary}[some classical means]
Let $D$ be a nonempty subset of $\RR$, 
let $(y_0,\ldots,y_{m-1})\in D^m$. Here is a list of choices for $f$
in Theorem~\ref{t:kolmo}, along with the limits obtained by
\eqref{e:kolmo2}: 
\begin{enumerate}
\item\label{i:israel1}
If $D=\RR$ and $f=\Id$, then the moving 
\emph{arithmetic mean} 
sequence satisfies
\begin{equation}
y_{n}=\frac{1}{m}y_{n-1}+\cdots+\frac{1}{m}y_{n-m} \to
\frac{2}{m(m+1)}\sum_{j=0}^{m-1}(j+1)y_j.
\end{equation}
\item\label{i:israel2} 
If $D=\RR_{++}$ and $f=\ln$, then the moving 
\emph{geometric mean} 
sequence satisfies
\begin{equation}
y_{n+m}=\sqrt[m]{y_{n+m-1}\cdots y_{n}} \to
\bigg(\prod_{j=0}^{m-1}y_{j}^{j+1}\bigg)^{\frac{2}{m(m+1)}}. 
\end{equation}
\item\label{i:israel3} 
If $D=\RR_+$ and $f\colon x\mapsto x^p$, then the moving
\emph{H\"older mean} sequence satisfies
\begin{equation}
y_{n+m}=\bigg(\frac{1}{m}y_{n+m-1}^{p}+\cdots+\frac{1}{m}y_{n}^{p}\bigg)^{1/p}
\to
\bigg(\frac{2}{m(m+1)}\sum_{j=0}^{m-1}(j+1)y_{j}^{p}\bigg)^{1/p}.
\end{equation}
\item\label{i:israel4} 
If $D=\RR_{++}$ and $f\colon x\mapsto 1/x$, then the moving
\emph{harmonic mean} sequence satisfies
\begin{equation}
y_{n+m}=\bigg(\frac{1}{m}\frac{1}{y_{n+m-1}}+\cdots
+\frac{1}{m}\frac{1}{y_{n}}\bigg)^{-1} \to
\bigg(\frac{2}{m(m+1)}\sum_{j=0}^{m-1}(j+1)\frac{1}{y_{j}}\bigg)^{-1}.
\end{equation}
\end{enumerate}
\end{corollary}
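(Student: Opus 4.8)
The plan is to apply Theorem~\ref{t:kolmo}\ref{t:kolmo2} four times, once for each listed $f$, preceded by a single computation of the weights $\lambda_k$ in the equal-weight case. First I would record that when $\alpha_0=\cdots=\alpha_{m-1}=1/m$ one has $a_k=(k+1)/m$, hence $\sum_{i=0}^{m-1}a_i=(m+1)/2$, and therefore
\[
\lambda_k=\frac{a_k}{\sum_{i=0}^{m-1}a_i}=\frac{2(k+1)}{m(m+1)}\qquad\big(k\in\{0,\ldots,m-1\}\big),
\]
so that $\sum_{k=0}^{m-1}\lambda_k f(y_k)=\frac{2}{m(m+1)}\sum_{j=0}^{m-1}(j+1)f(y_j)$. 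Since $\alpha_{m-1}=1/m>0$, Basic~Hypothesis~\ref{bahy}\ref{bahy2} holds, so Theorem~\ref{t:kolmo}\ref{t:kolmo2} is applicable as soon as the standing hypotheses on $f$ are verified, and in each case the limit is exactly the right-hand side displayed.

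For each part it then remains to check that $f$ is injective, that $\ran f$ is convex, and that $f^{-1}$ is continuous on $f(D)$, and to read the limit off from \eqref{e:kolmo2}. For~\ref{i:israel1}, $f=\Id\colon\RR\to\RR$ is injective with range $\RR$ and inverse $\Id$, so \eqref{e:kolmo2} yields the arithmetic-mean limit verbatim. For~\ref{i:israel2}, $f=\ln\colon\RR_{++}\to\RR$ is injective with range $\RR$ and continuous inverse $\exp$; applying $f^{-1}$ to the recurrence gives $y_n=\sqrt[m]{y_{n-1}\cdots y_{n-m}}$, and applying $\exp$ to $\frac{2}{m(m+1)}\sum_j(j+1)\ln y_j$ produces $\prod_j y_j^{2(j+1)/(m(m+1))}=\big(\prod_j y_j^{j+1}\big)^{2/(m(m+1))}$. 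For~\ref{i:israel3}, with $p>0$ (which is what makes $D=\RR_+$ legitimate), $f\colon x\mapsto x^p$ is injective on $\RR_+$ with range $\RR_+$ and continuous inverse $x\mapsto x^{1/p}$, so \eqref{e:kolmo2} gives the H\"older-mean limit after taking $1/p$-th powers. For~\ref{i:israel4}, $f\colon x\mapsto 1/x$ is injective on $\RR_{++}$ with range $\RR_{++}$ and continuous inverse $f$ itself, so \eqref{e:kolmo2} produces the reciprocal of $\frac{2}{m(m+1)}\sum_j(j+1)/y_j$, i.e.\ the harmonic-mean limit.

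No step is a genuine obstacle; the content is entirely a specialization of Theorem~\ref{t:kolmo}. The only points needing a moment's care are confirming that each $\ran f$ is convex — this is precisely what guarantees, via Theorem~\ref{t:kolmo}\ref{t:kolmo1}, that $(f(y_n))_\nnn$ stays inside $f(D)$ and that $f^{-1}$ may legitimately be applied to its limit — and flagging the tacit restriction $p>0$ in~\ref{i:israel3}; were $p<0$ intended, one would instead take $D=\RR_{++}$, on which $f\colon x\mapsto x^p$ is still a continuous bijection onto an interval, and the same argument goes through unchanged.
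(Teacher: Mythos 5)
Your proposal is correct and matches the paper's (implicit) argument exactly: the corollary is obtained by specializing Theorem~\ref{t:kolmo}\ref{t:kolmo2} to equal weights $\alpha_0=\cdots=\alpha_{m-1}=1/m$, noting via \ref{bahy2} that the Basic Hypothesis holds and computing $\lambda_k=2(k+1)/(m(m+1))$, which is precisely your calculation. Your case-by-case verification of injectivity, convexity of the range, and continuity of $f^{-1}$, together with flagging the tacit restriction $p>0$ in~\ref{i:israel3}, is a sound (and slightly more careful) rendering of what the paper leaves unstated.
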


Let us provide some means situated in a space of matrices.

\begin{corollary}[some matrix means]
Let $(Y_0,\ldots,Y_{m-1})\in\smpp$.
Then the following hold:
\begin{enumerate}
\item\label{i:israelm1}
The moving arithmetic mean satisfies
\begin{equation}
Y_{n}=\frac{1}{m}Y_{n-1}+\cdots+\frac{1}{m}Y_{n-m} 
\to \frac{2}{m(m+1)}\sum_{j=0}^{m-1}(j+1)Y_j.
\end{equation}
\item\label{i:israelm2} 
The moving harmonic mean satisfies
\begin{equation}Y_{n}=\bigg(\frac{1}{m}Y_{n-1}^{-1}+\cdots
+\frac{1}{m}Y_{n-m}^{-1}\bigg)^{-1} 
\to \bigg(\frac{2}{m(m+1)}\sum_{j=0}^{m-1}(j+1)Y_{j}^{-1}\bigg)^{-1}.
\end{equation}
\item\label{i:israelm3} 
The moving resolvent mean (see also {\rm \cite{bauschke2010}}\,)
satisfies
\begin{multline}
Y_{n}=\bigg(\frac{1}{m}(Y_{n-1}+\Id)^{-1}+\cdots+\frac{1}{m}(Y_{n-m}+\Id)^{-1}\bigg)^{-1}-\Id\\
\to
\bigg(\frac{2}{m(m+1)}\sum_{j=0}^{m-1}(j+1)(Y_{j}+\Id)^{-1}\bigg)^{-1}-\Id.
\end{multline}
\end{enumerate}
\end{corollary}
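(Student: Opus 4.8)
The plan is to obtain all three assertions as instances of Theorem~\ref{t:kolmo}\ref{t:kolmo2}, taking $Z = \sm$ as the ambient (finite-dimensional, hence Banach) space, $D$ a suitable subset, and the equal weights $\alpha_0 = \cdots = \alpha_{m-1} = 1/m$, for which the {\rm\bf Basic~Hypothesis~\ref{bahy}} holds by \ref{bahy2}. First I would record the relevant constants: with $\alpha_i = 1/m$ one has $a_k = (k+1)/m$ and $\sum_{k=0}^{m-1} a_k = (m+1)/2$, hence $\lambda_k = 2(k+1)/\big(m(m+1)\big)$, so that $\sum_{k=0}^{m-1}\lambda_k z_k = \tfrac{2}{m(m+1)}\sum_{j=0}^{m-1}(j+1)z_j$ for arbitrary $z_0,\ldots,z_{m-1}$ in $Z$. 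This already exhibits the coefficient appearing in each of the three limits, so what remains is just to identify the Kolmogorov function $f$ in each case and check the hypotheses of Theorem~\ref{t:kolmo}, namely injectivity of $f$, convexity of $\ran f$, and continuity of $f^{-1}$ on $f(D)$.

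For \ref{i:israelm1} I would take $D = \smpp$ and $f = \Id$: then $\ran f = \smpp$ is a convex cone and $f^{-1} = \Id$ is continuous, so Theorem~\ref{t:kolmo}\ref{t:kolmo2} gives $\lim_\nnn Y_n = \sum_{k=0}^{m-1}\lambda_k Y_k$, which is the claimed limit. For \ref{i:israelm2} I would take $D = \smpp$ and $f\colon Y\mapsto Y^{-1}$: inversion is an injective involution of $\smpp$ onto itself, so $\ran f = \smpp$ is convex and $f^{-1} = f$ is continuous on $\smpp$; Theorem~\ref{t:kolmo}\ref{t:kolmo2} then yields $\lim_\nnn Y_n = \big(\sum_{k=0}^{m-1}\lambda_k Y_k^{-1}\big)^{-1}$. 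For \ref{i:israelm3} I would take $D = \smpp$ and $f\colon Y\mapsto (Y+\Id)^{-1}$: here $f$ is clearly injective and $f^{-1}\colon Z\mapsto Z^{-1}-\Id$ is continuous on $\ran f$, and the one point deserving care is that $\ran f = \menge{Z\in\sm}{0\prec Z\prec\Id}$, which is convex as an intersection of two Loewner half-spaces. The inclusion $\subseteq$ follows because $Y\succ 0$ forces $Y+\Id\succ\Id$ and inversion reverses the Loewner order, giving $0\prec(Y+\Id)^{-1}\prec\Id$; the reverse inclusion follows because $0\prec Z\prec\Id$ forces $Z^{-1}\succ\Id$, so $Z^{-1}-\Id\in\smpp$ and $f(Z^{-1}-\Id) = Z$. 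Then Theorem~\ref{t:kolmo}\ref{t:kolmo2} gives $\lim_\nnn Y_n = \big(\sum_{k=0}^{m-1}\lambda_k (Y_k+\Id)^{-1}\big)^{-1}-\Id$.

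In each case Theorem~\ref{t:kolmo}\ref{t:kolmo1} simultaneously guarantees that $(Y_n)_\nnn$ is well defined, since the convex combination $\sum_{k}\alpha_{m-1-\cdot}\,f(Y_{\cdot})$ defining $f(Y_n)$ stays in the convex set $\ran f$ by induction; substituting the three choices of $f$ and the value $\lambda_k = 2(k+1)/(m(m+1))$ computed above into $\lim_\nnn Y_n = f^{-1}\big(\sum_{k=0}^{m-1}\lambda_k f(Y_k)\big)$ reproduces verbatim the three displayed limits. I do not expect a genuine obstacle: the arithmetic- and harmonic-mean cases are immediate, the continuity of $f^{-1}$ and the applicability of the Basic Hypothesis are routine throughout, and the only step requiring a moment's thought is the identification and convexity of $\ran f$ for the resolvent mean in \ref{i:israelm3}.
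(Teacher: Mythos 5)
Your proposal is correct and follows essentially the same route as the paper: the paper's own proof simply invokes Theorem~\ref{t:kolmo} with $D=\smpp$ and the three choices $f=\Id$, $f\colon X\mapsto X^{-1}$, $f\colon X\mapsto(X+\Id)^{-1}$, citing continuity of matrix inversion. Your additional verifications (the value $\lambda_k=2(k+1)/(m(m+1))$ and the identification of $\ran f$ as $\menge{Z\in\sm}{0\prec Z\prec \Id}$ in the resolvent case) are exactly the details the paper leaves tacit, and they are carried out correctly.
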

\begin{proof}
This follows from Theorem~\ref{t:kolmo} with $D=\smpp$ and 
$f=\Id$, $f=X\mapsto X^{-1}$, and
$f=X\mapsto (X+\Id)^{-1}$, respectively.
Note that matrix inversion is continuous; see, e.g., 
\cite[Example~6.2.7]{Meyer}. 
\end{proof}

\section{Moving proximal and epi averages for functions}
\label{s:lastsection}

In this last section, 
we apply the moving average (in particular Corollary~\ref{c:positivecoe}) 
to functions in the context of 
proximal and epi averages. Let us start by reviewing a key notion
(see also \cite{attouch,BorVanBook,Rock98}). 

\begin{definition}[epi-convergence]
{\rm (See \cite[Proposition 7.2]{Rock98}.)}
Let $g$ and $(g_n)_\nnn$ be functions from $X$ to $\RX$. 
Then
\begin{enumerate}
\item 
$(g_n)_{n\in\NN}$ \emph{epi-converges} to $g$, in symbols $g_n\aw g$,
if for every $x\in X$ one has
\begin{enumerate}
\item $\big(\forall\,(x_n)_{\nnn}\big)$ $x_n\to x \Rightarrow
g(x)\leq\varliminf g_n(x_n)$, and 
\item $\big(\exi (y_n)_\nnn\big)$ $y_n\to x$
and $\varlimsup g_n(y_n) \leq g(x)$;
\end{enumerate}
\item 
$(g_n)_{n\in\NN}$ \emph{pointwise converges} to $g$, in symbols $g_n\pw g$,
if for every $x\in X$ we have
$g(x)=\lim_{\nnn}g_{n}(x)$.
\end{enumerate}
\end{definition}

Let $g:X\rightarrow \RX$ be an extended real valued function.
Recall that
\begin{equation}
\big(g\Box\jj\big) \colon x\mapsto 
\inf_{y\in X}\Big(g(y)+\tfrac{1}{2}\|x-y\|^2\Big)
\end{equation}
is the \emph{Moreau envelope} of $g$. 
By \cite[Theorem~2.26]{Rock98}, if $g\in\GX(X)$, then 
$g\Box\jj$ is
convex and continuously differentiable on $X$.
Especially important are the following connections among epi-convergence 
pointwise convergence of envelopes, and the epi-continuity of Fenchel
conjugation.

\begin{fact}
{\rm (See \cite[Theorems 7.37 and 11.34]{Rock98}.)}
\label{f:berenstein}
Let $(g_n)_\nnn$ and $g$ be in $\GX(X)$.
Then
\begin{equation}
g_{n}\aw g 
\quad \Leftrightarrow \quad 
g_{n}\Box\jj \pw g\Box\jj
\quad \Leftrightarrow \quad 
g_{n}^*\aw  g^*.
\end{equation}
\end{fact}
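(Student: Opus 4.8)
The plan is to reduce both asserted equivalences to a single structural fact --- the \emph{epi-continuity of Fenchel conjugation} on $\GX(X)$, namely that for $h_n,h\in\GX(X)$ one has $h_n\aw h$ if and only if $h_n^*\aw h^*$ --- together with two elementary manipulations. The first step is to establish (or simply cite) this conjugacy equivalence. In the present finite-dimensional setting $X=\RR^m$ it is Wijsman's theorem: one identifies epi-convergence of proper lower semicontinuous convex functions with Painlev\'e--Kuratowski convergence of their epigraphs, and then exploits the continuity of the polarity operation on closed convex sets --- equivalently, the order-preserving bijection between the affine minorants of $h$ and those of $h^*$; see \cite[Theorem~11.34]{Rock98} (or \cite{attouch}). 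I expect this to be the main obstacle, since everything downstream is bookkeeping.

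The second step uses the self-duality $\jj^*=\jj$ and the formula for the conjugate of an infimal convolution. Since $(f\Box k)^*=f^*+k^*$ holds unconditionally, we get $(g\Box\jj)^*=g^*+\jj$; and because $g\Box\jj$ is finite-valued and convex (hence equal to its own biconjugate) one may dualize again to obtain $g\Box\jj=(g^*+\jj)^*$. I would also record the standard fact that adding a fixed real-valued continuous function leaves epi-limits unchanged in both directions, i.e., $h_n\aw h\Leftrightarrow h_n+\jj\aw h+\jj$. Chaining the conjugacy equivalence with these two observations gives
\begin{align*}
g_n\aw g
&\;\Leftrightarrow\; g_n^*\aw g^*
\;\Leftrightarrow\; g_n^*+\jj\aw g^*+\jj\\
&\;\Leftrightarrow\; (g_n^*+\jj)^*\aw (g^*+\jj)^*
\;\Leftrightarrow\; g_n\Box\jj\aw g\Box\jj,
\end{align*}
where the first and third equivalences are conjugacy and the second and fourth are the additivity fact and the duality identity, respectively. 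This already proves one of the two asserted equivalences, $g_n\aw g\Leftrightarrow g_n^*\aw g^*$, and reduces the other to replacing ``$\aw$'' by ``$\pw$'' for the Moreau envelopes.

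The final step handles that replacement. By \cite[Theorem~2.26]{Rock98} each of $g\Box\jj$ and $g_n\Box\jj$ is convex and finite-valued (indeed continuously differentiable) on all of $X$. For a sequence of convex functions that are finite on all of $\RR^m$, epi-convergence to a finite convex limit is equivalent to pointwise convergence --- and is in fact equivalent to uniform convergence on compact subsets; see \cite[Theorem~7.17]{Rock98}. Hence $g_n\Box\jj\aw g\Box\jj$ if and only if $g_n\Box\jj\pw g\Box\jj$, which closes the chain. (Alternatively, this last equivalence is exactly \cite[Theorem~7.37]{Rock98}.) The points that need care are proving the conjugacy equivalence in the required generality (proper lsc convex, not merely finite-valued), confirming that $\jj$ being real-valued and continuous genuinely suffices for the additivity fact, and checking the hypotheses of the convex epi-versus-pointwise equivalence --- which here is automatic since all the envelopes involved are globally finite.
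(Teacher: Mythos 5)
Your argument is correct, and it is worth noting that the paper does not prove this statement at all: it is quoted as a known fact with a pointer to \cite[Theorems~7.37 and 11.34]{Rock98}, so your proposal is a reconstruction of the cited material rather than a deviation from an existing proof. The route you take is sound: Wijsman's theorem (RW~11.34) gives $g_n\aw g\Leftrightarrow g_n^*\aw g^*$; the identity $(g\Box\jj)^*=g^*+\jj$ together with biconjugation (valid because $g\Box\jj$ is finite, convex, hence lsc and equal to $(g^*+\jj)^*$) and the stability of epi-limits under addition of the fixed real-valued continuous function $\jj$ (reversible by adding $-\jj$) yields $g_n\aw g\Leftrightarrow g_n\Box\jj\aw g\Box\jj$; and the passage from epi- to pointwise convergence of the envelopes is legitimate since all envelopes are finite convex functions on $\RR^m$, where epi-convergence to a finite convex limit coincides with pointwise (indeed locally uniform) convergence, which is exactly the content of RW~7.37 (or follows from RW~7.17). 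The hypotheses you flag as needing care are indeed satisfied here, so there is no gap; what your derivation buys, compared with the paper's bare citation, is a self-contained proof modulo Wijsman's theorem, at the cost of re-deriving what RW~7.37 already packages.
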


We now turn to the moving proximal average (see \cite{bglw08} for
further information on this operation).

\begin{theorem}[moving proximal average]
\label{t:berenstein}
Let $(g_0,\ldots,g_{m-1})\in(\Gamma(X))^m$ and define
\begin{equation}
(\forall n\geq m)\quad
g_{n} =\big(\alpha_{m-1}(g_{n-1}+\jj)^* + \cdots +
\alpha_0(g_{n-m}+\jj)^*\big)^* -\jj.\label{e:p:otherforms:2}
\end{equation}
Then the following hold:
\begin{enumerate}
\item
\label{t:berenstein1}
The sequence $(g_n)_\nnn$ lies in $\Gamma(X)$.
\item
\label{t:berenstein2}
$(\forall n\geq m)$ 
$g_{n}\Box\jj=\alpha_{m-1}(g_{n-1}\Box\jj)+\cdots+ 
\alpha_{0}(g_{n-m}\Box\jj)$.
\item
\label{t:berenstein3}
Suppose that the 
{\rm\bf Basic~Hypothesis~\ref{bahy}} holds.
Then 
\begin{subequations}
\begin{equation}
g_n \aw
g=\big(\lambda_{m-1}(g_{m-1}+\jj)^* + \cdots +
\lambda_0(g_{0}+\jj)^*\big)^* -\jj.
\end{equation}
and 
\begin{equation}
g_n^* \aw
g^*=\big(\lambda_{m-1}(g^*_{m-1}+\jj)^* + \cdots +
\lambda_0(g^*_{0}+\jj)^*\big)^* -\jj.
\end{equation}
\end{subequations}
\end{enumerate}
\end{theorem}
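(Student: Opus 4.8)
\textbf{Proof proposal for Theorem~\ref{t:berenstein}.}

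The plan is to exploit the ``hidden linearity'' that the proximal-average recursion \eqref{e:p:otherforms:2} acquires after applying the Moreau envelope, so that the convergence statements in \ref{t:berenstein3} become consequences of the scalar/Banach-space moving-average result (Corollary~\ref{c:positivecoe}, or rather its functional counterpart) combined with the epi-convergence dictionary in Fact~\ref{f:berenstein}. First I would establish \ref{t:berenstein1} and \ref{t:berenstein2} together by induction on $n\geq m$. For \ref{t:berenstein1}, the key observation is that for any $h\in\GX(X)$ the function $(h+\jj)^*$ is finite-valued, convex and continuous (indeed $1$-coercive arguments give $h+\jj$ is supercoercive, so its conjugate is everywhere finite); hence the convex combination $\sum_{i}\alpha_i(g_{n-i}+\jj)^*$ is again a finite convex continuous function, its conjugate lies in $\GX(X)$, and subtracting $\jj$ keeps it in $\GX(X)$ because the proximal average of elements of $\GX(X)$ is again in $\GX(X)$ --- this is exactly the content of \cite{bglw08}, which I would cite rather than reprove. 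For \ref{t:berenstein2}, I would use the standard identity $h\Box\jj = \jj - (h+\jj)^*$ (equivalently $(h\Box\jj)(x) = \thalb\|x\|^2 - (h+\jj)^*(x)$, valid for $h\in\GX(X)$); applying it to $g_n$ and to each $g_{n-i}$ and plugging \eqref{e:p:otherforms:2} in, the $\jj$ terms telescope and the $\alpha_i$'s sum to $1$, leaving precisely $g_n\Box\jj = \sum_{i=0}^{m-1}\alpha_{m-1-i}(g_{n-1-i}\Box\jj)$.

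Next, for \ref{t:berenstein3}, I would read \ref{t:berenstein2} as a linear recurrence relation \eqref{e:lrecursion} in the Banach space $Y = C(X)$ (or any space large enough to contain all the Moreau envelopes, e.g.\ the space of real-valued functions on $X$ with the topology of pointwise convergence --- since Corollary~\ref{c:positivecoe} is scalar, it suffices to run the recurrence pointwise at each fixed $x\in X$). Applying Corollary~\ref{c:positivecoe} with initial data $\big((g_0\Box\jj)(x),\ldots,(g_{m-1}\Box\jj)(x)\big)$ gives, for every $x$,
\begin{equation}
(g_n\Box\jj)(x) \;\longrightarrow\; \sum_{k=0}^{m-1}\lambda_k\,(g_k\Box\jj)(x),
\end{equation}
that is, $g_n\Box\jj \pw \sum_{k=0}^{m-1}\lambda_k(g_k\Box\jj)$. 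Now I would identify the pointwise limit as the Moreau envelope of the claimed function $g$: using $\sum_k\lambda_k = 1$ and the identity $h\Box\jj = \jj - (h+\jj)^*$ in reverse, $\sum_k\lambda_k(g_k\Box\jj) = \jj - \sum_k\lambda_k(g_k+\jj)^* = \jj - \big(\big(\sum_k\lambda_k(g_k+\jj)^*\big)^*\big)^{**}$; since $\sum_k\lambda_k(g_k+\jj)^*$ is convex lsc (a finite convex combination of continuous convex functions), it equals its own biconjugate, so this is exactly $\jj - \big(g+\jj\big)^{*}\!\circ$ wait --- more cleanly, $g+\jj = \big(\sum_k\lambda_k(g_k+\jj)^*\big)^*$ by definition of $g$, hence $(g+\jj)^* = \sum_k\lambda_k(g_k+\jj)^*$, hence $g\Box\jj = \jj - (g+\jj)^* = \sum_k\lambda_k(g_k\Box\jj)$. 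Therefore $g_n\Box\jj \pw g\Box\jj$, and Fact~\ref{f:berenstein} (the equivalence $g_n\Box\jj\pw g\Box\jj \Leftrightarrow g_n\aw g$) delivers $g_n\aw g$. The second displayed convergence, $g_n^*\aw g^*$, then follows either by applying Fact~\ref{f:berenstein} again ($g_n\aw g \Leftrightarrow g_n^*\aw g^*$) together with the elementary fact that the conjugate of the proximal average with weights $\lambda_k$ of the $g_k$ is the proximal average with the same weights of the $g_k^*$ (this self-duality of the proximal average is established in \cite{bglw08}), or, symmetrically, by rerunning the whole argument with the recurrence written for $g_n^*$, noting that \eqref{e:p:otherforms:2} conjugated term-by-term has the same shape.

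The main obstacle is not the convergence mechanism --- once \ref{t:berenstein2} is in hand, everything is a routine application of results already in the paper --- but rather the bookkeeping needed to justify that all the conjugates involved are well-behaved (finite-valued, so that biconjugation is the identity and the envelope identity $h\Box\jj = \jj - (h+\jj)^*$ is literally valid with no $+\infty$ pathologies), and the clean identification of the pointwise limit with $g\Box\jj$ for the specific $g$ written in the statement. I would handle the first point up front as a lemma-style remark: for $h\in\GX(X)$, $h+\jj$ is proper lsc convex and $1$-coercive, hence $(h+\jj)^*\in\GX(X)$ is finite and continuous, and $h\Box\jj = \jj - (h+\jj)^* \in\GX(X)$ is the Moreau envelope (all standard, e.g.\ \cite[Section~12]{Rock98} or \cite[Chapters~13--14]{BC2011}). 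I would handle the second point exactly as above by unwinding the definition of $g$. One subtlety worth a sentence: to invoke Fact~\ref{f:berenstein} we need $g\in\GX(X)$, which is again the proximal-average stability from \cite{bglw08}, so I would state that dependence explicitly.
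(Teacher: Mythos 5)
Your proposal is correct and follows essentially the same route as the paper: linearize via the Moreau envelope identity in \ref{t:berenstein2}, apply Corollary~\ref{c:positivecoe} to get $g_n\Box\jj\pw g\Box\jj$, and conclude with Fact~\ref{f:berenstein} together with the self-duality of the proximal average from \cite{bglw08} for the conjugate statement. The only (harmless) difference is that you derive \ref{t:berenstein2} directly from $h\Box\jj=\jj-(h+\jj)^*$ and biconjugation, where the paper simply cites \cite[Theorem~6.2]{bglw08}.
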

\begin{proof}
\ref{t:berenstein1}: Combine 
\cite[Propositions~4.3 and 5.2]{bglw08}. 

\ref{t:berenstein2}: See \cite[Theorem~6.2]{bglw08}. 

\ref{t:berenstein3}:
It follows from \ref{t:berenstein2} and Corollary~\ref{c:positivecoe}
that 
$g_n\Box\jj\pw\sum_{k=0}^{m-1}\lambda_k(g_k\Box\jj)=g\Box\jj$.
The result now follows from Fact~\ref{f:berenstein} and
\cite[Theorem~5.1]{bglw08}. 
\end{proof}

We conclude the paper with the moving epi-average.
Recall that for $\alpha > 0$ and $g\in\Gamma(X)$,
$\alpha\timess g = \alpha g\circ \alpha^{-1}\Id$ and that
$0\timess g = \iota_{\{0\}}$.

\begin{theorem}[moving epi-average]
Let $(g_0,\ldots,g_{m-1})\in(\Gamma(X))^m$ and define
\begin{equation}
\label{e:0614z}
(\forall n\geq m)\quad
g_{n} = (\alpha_{m-1}\timess g_{n-1}) \Box \cdots \Box
(\alpha_0\timess g_{n-m}).
\end{equation}
Suppose that for every $k\in\{0,\ldots,m-1\}$, $g_k$ is cofinite and
that the {\rm\bf Basic~Hypothesis~\ref{bahy}} holds.
Then 
\begin{equation}
g_n\aw  g = (\lambda_{m-1}\timess g_{n-1}) \Box \cdots \Box
(\lambda_0\timess g_{n-m}).
\end{equation}
\end{theorem}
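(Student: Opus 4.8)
The plan is to \emph{dualize}. Recall that $\jj^*=\jj$, that $(f_0\Box\cdots\Box f_{m-1})^*=f_0^*+\cdots+f_{m-1}^*$ holds unconditionally, and that for $\alpha\ge 0$ and $h\in\Gamma(X)$ one has $(\alpha\timess h)^*=\alpha h^*$ (with $0\timess h=\iota_{\{0\}}$, so $(0\timess h)^*=0=0\cdot h^*$ whenever $h^*$ is finite). Taking Fenchel conjugates in \eqref{e:0614z} therefore turns the epi-recursion into the \emph{linear} moving-average recursion
\begin{equation}
(\forall n\ge m)\quad g_n^*=\alpha_{m-1}g_{n-1}^*+\cdots+\alpha_0 g_{n-m}^* .
\end{equation}
First I would check, by induction on $n$, that every $g_n$ lies in $\Gamma(X)$ and is cofinite, so that every $g_n^*$ is a finite-valued (hence continuous) convex function: cofiniteness of $g_0,\dots,g_{m-1}$ makes each $g_n^*$ a nonnegative combination of finite convex functions, hence finite convex, whence $g_n=g_n^{**}$ is proper, lsc and convex; that the infimal convolution defining $g_n$ is \emph{exact} (and thus lsc, so indeed $g_n=g_n^{**}$) follows because $\sum_k\alpha_k=1$ forces some $\alpha_k>0$, whence at least one summand $\alpha_k\timess g_{n-m+k}$ is supercoercive, and the infimal convolution of a supercoercive $\Gamma(X)$-function with $\Gamma(X)$-functions is again in $\Gamma(X)$ with the infimum attained (see, e.g., \cite{BC2011}).

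Next, fixing $x\in X$ and reading the displayed recursion at $x$, the real sequence $(g_n^*(x))_\nnn$ obeys the scalar recurrence \eqref{e:lrecursion} with initial data $(g_0^*(x),\dots,g_{m-1}^*(x))\in\RR^m$; since the {\rm\bf Basic~Hypothesis~\ref{bahy}} holds, Corollary~\ref{c:positivecoe} yields $g_n^*(x)\to\sum_{k=0}^{m-1}\lambda_k g_k^*(x)$. Hence $g_n^*\pw h$, where $h:=\sum_{k=0}^{m-1}\lambda_k g_k^*\in\Gamma(X)$ is finite convex. Writing $g=(\lambda_{m-1}\timess g_{m-1})\Box\cdots\Box(\lambda_0\timess g_0)$ for the claimed limit, the unconditional conjugation rule gives $g^*=\sum_{k=0}^{m-1}\lambda_k g_k^*=h$, and the same exactness argument as above (now with $\sum_k\lambda_k=1>0$) shows $g\in\Gamma(X)$ and $g=h^*$.

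Finally, $(g_n^*)_\nnn$ is a sequence of finite convex functions on $X=\RR^m$ converging pointwise to the finite convex function $h=g^*$; invoking the classical fact that pointwise convergence of finite convex functions to a finite convex function forces uniform convergence on compact sets, and hence epi-convergence (see, e.g., \cite[Theorem~10.8]{Rock70} and \cite[Theorem~7.17]{Rock98}), I obtain $g_n^*\aw g^*$. Since $g_n,g\in\Gamma(X)$, Fact~\ref{f:berenstein} then gives $g_n\aw g$, which is the assertion. I expect the two main obstacles to be the cofiniteness/exactness bookkeeping in the induction — in particular making sure a vanishing coefficient $\alpha_i$, for which $\alpha_i\timess g_{n-m+i}=\iota_{\{0\}}$, causes no trouble — and the passage from pointwise to epi-convergence, where the convexity of the $g_n^*$ is precisely what makes it work.
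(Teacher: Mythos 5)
Your proposal is correct and follows essentially the same route as the paper's proof: conjugate the epi-recursion into the linear recursion $g_n^*=\alpha_{m-1}g_{n-1}^*+\cdots+\alpha_0 g_{n-m}^*$, apply Corollary~\ref{c:positivecoe} pointwise to get $g_n^*\pw g^*$, upgrade this to epi-convergence using convexity, and conclude with Fact~\ref{f:berenstein}. The only differences are in the citations used for the bookkeeping steps: the paper invokes \cite[Proposition~15.7(iv)]{BC2011} where you run the cofiniteness/exactness induction by hand, and it passes from pointwise to epi-convergence via equi-lower-semicontinuity and \cite[Theorem~7.10]{Rock98} where you use the locally uniform convergence of finite convex functions --- both are standard and equally valid.
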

\begin{proof}
It follows from \cite[Proposition~15.7(iv)]{BC2011} that
$(\forall\nnn)$ $g_n\in\Gamma(X)$,
$g_n$ is cofinite, and $g_n^*$ is continuous everywhere. 
Furthermore, taking the Fenchel conjugate of \eqref{e:0614z} yields
\begin{equation}
(\forall n\geq m)\quad
g_{n}^* = \alpha_{m-1}g_{n-1}^* + \cdots +
\alpha_0 g_{n-m}^*.
\end{equation}
This and Corollary~\ref{c:positivecoe} imply 
not only that $g_n^*\pw g^*$ but also that the 
sequence $(g_n^*)_\nnn$ is equi-lsc everywhere 
(see \cite[pages~248f]{Rock98}). 
In turn, \cite[Theorem~7.10]{Rock98} implies that $g_n^*\aw g^*$.
The conclusion therefore follows from Fact~\ref{f:berenstein}. 
\end{proof}

\section*{Acknowledgments}
The authors are grateful to Jon Borwein and
Tam{\'a}s Erd{\'e}lyi for 
some helpful comments.
Heinz Bauschke was partially supported by the Natural Sciences and
Engineering Research Council of Canada and by the Canada Research Chair
Program.
Joshua Sarada was partially supported
by the Irving K.\ Barber Endowment Fund.
Xianfu Wang was partially
supported by the Natural Sciences and Engineering Research Council
of Canada.



\end{document}